\newtheorem{thm}{\bf Theorem}[section]
\newtheorem{prop}[thm]{\bf Proposition}
\newtheorem{lem}[thm]{\bf Lemma}
\newtheorem{q}[thm]{\bf Question}
\newtheorem*{thm*}{\bf Theorem}
\newtheorem*{cor*}{\bf Corollary}
\theoremstyle{definition}
\newtheorem{rem}[thm]{\it Remark}
\newtheorem*{df*}{\bf Definition}
\newtheorem*{dfs*}{\bf Definitions}
\newtheorem*{ack*}{\bf Acknowledgements}
\numberwithin{equation}{section}
\def\A{\mathbb{A}}
\def\P{\mathbb{P}}
\def\C{\mathbb{C}}
\def\F{\mathbb{F}}
\def\Q{\mathbb{Q}}
\def\Z{\mathbb{Z}}
\def\R{\mathbb{R}}
\def\G{\mathbb{G}}
\DeclareMathOperator{\alg}{alg}
\DeclareMathOperator{\tors}{tors}
\DeclareMathOperator{\Ker}{Ker}
\DeclareMathOperator{\Image}{Im}
\DeclareMathOperator{\Coker}{Coker}
\DeclareMathOperator{\Gal}{Gal}
\DeclareMathOperator{\nr}{nr}
\DeclareMathOperator{\cl}{cl}
\DeclareMathOperator{\tf}{tf}
\DeclareMathOperator{\cd}{cd}
\DeclareMathOperator{\Gm}{\mathbb{G}_{\operatorname{m}}}
\newcommand{\ang}[1]{\left \langle{#1}\right \rangle}
\newcommand{\ov}{\overline}
\newcommand{\on}{\operatorname}
\newcommand{\mc}{\mathcal}
\title[\tiny Non-injectivity of the cycle class map]{Non-injectivity of the cycle class map in continuous $\ell$-adic cohomology}
\author{Federico Scavia}
\address{Department of Mathematics\\
	University of California\\
	Los Angeles, CA 90095-1555 \\ USA}
\email{scavia@math.ucla.edu}
\author{Fumiaki Suzuki}
\email{suzuki@math.ucla.edu}
\date{December 30th, 2022}
\subjclass[2020]{Primary: 14C25, Secondary: 14F20, 14F43, 11G35}
\begin{document}
	
	\maketitle
	
	\begin{abstract}
 Jannsen asked whether the rational cycle class map in continuous $\ell$-adic cohomology is injective, in every codimension for all smooth projective varieties over a field of finite type over the prime field. As recently pointed out by Schreieder, the integral version of Jannsen's question is also of interest.
	We exhibit several examples showing that the answer to the integral version is negative in general. Our examples also have consequences for the coniveau filtration on Chow groups and the transcendental Abel-Jacobi map constructed by Schreieder.
	\end{abstract}
	
	\section{Introduction}
		Let $k$ be a field, 
		$k_s\subset \ov{k}$ be a separable and an algebraic closure of $k$, 
		respectively, $\ell$ be a prime number invertible in $k$, and $X$ be a smooth projective $k$-variety. For all integers $i$ and $j$, we denote by $CH^i(X)$ the Chow group of codimension $i$ cycles modulo rational equivalence, and by  $H^{i}(X,\Q_{\ell}(j))$ the continuous $\ell$-adic cohomology defined by Jannsen \cite{jannsen1988continuous} (or equivalently, the pro-\'etale cohomology defined by Bhatt and Scholze \cite{bhatt2015proetale}). Motivated by 
		the Bloch--Beilison
		conjecture on the existence of a certain functorial filtration on $CH^i(X)\otimes_{\Z}\Q$ and its relation to the conjectural theory of mixed motives, Jannsen \cite[Question 2.8]{jannsen1994motivic} asked the following question.
	
	\begin{q}[Jannsen]\label{jannsen}
	Suppose that $k$ is of finite type over its prime field. Is the $\ell$-adic cycle class map
	\[\cl\colon CH^i(X)\otimes_{\Z}\Q_{\ell}\to H^{2i}(X,\Q_{\ell}(i))\]
	injective?
	\end{q}	
	
	A positive answer to \Cref{jannsen} would imply 
	the Bloch--Beilinson
	conjecture \cite[Conjecture 2.1]{jannsen1994motivic} over $k$. More precisely, consider the Hochschild-Serre spectral sequence in continuous $\ell$-adic cohomology \cite[Corollary (3.4)]{jannsen1988continuous}:
	\begin{equation*}\label{hochschild-serre-intro}E_2^{p,q}=H^p(k,H^q(X_{k_s},\Q_{\ell}(i)))\Rightarrow H^{p+q}(X,\Q_{\ell}(i)).\end{equation*}
	The spectral sequence degenerates at the $E_2$ page, and so gives a filtration
		\[\{0\}=F^{i+1}\subset F^i\subset\cdots \subset F^1\subset F^0=H^{2i}(X,\Q_\ell(i)).\]
	where 
	$F^p/F^{p+1}\simeq H^p(k,H^{2i-p}(X_{k_s},\Q_{\ell}(i)))$ for all $p\geq 0$. 
	If \Cref{jannsen} had a positive answer, then the inverse image of $F^{\cdot}$ would be a filtration on $CH^i(X)$ with all properties predicted by 
	Bloch and Beilinson,
	proving 
	the Bloch--Beilinson conjecture;
	see \cite[Lemma 2.7]{jannsen1994motivic}. 
	
	Of course, there is no reason to expect \Cref{jannsen} to have an affirmative answer over an arbitrary field. For example, if $k$ is algebraically closed, the kernel of the cycle class map is the group of homologically trivial cycles modulo rational equivalence, and it is often non-trivial: in particular, the cycle class map factors through algebraic equivalence. However, the situation when $k$ is of finite type over its prime field is very different. Indeed, in this case, Jannsen observed that, as a consequence of the Mordell-Weil theorem, the {\em integral} codimension $1$ cycle class map $\on{Pic}(X)\otimes_{\Z}\Z_{\ell}\to H^2(X,\Z_\ell(1))$ is injective; see \cite[Remark 6.15 (a)]{jannsen1988continuous}. This naturally leads to the following variant of Jannsen's question.
	
	\begin{q}\label{schreieder}
	Suppose that $k$ is of finite type over its prime field. Is the $\ell$-adic cycle class map
	\[\cl\colon CH^i(X)\otimes_{\Z}\Z_{\ell}\to H^{2i}(X,\Z_{\ell}(i))\]
	injective?
	\end{q}
	
	As noted by Jannsen, \Cref{schreieder} has an affirmative answer for $i=1$. \Cref{schreieder} is also implicit in work of Saito \cite{saito1992cycle}, who obtained some positive results for $i=2$. 
	Colliot-Th\'el\`ene--Sansuc--Soul\'e \cite{colliot1983torsion} showed that the $\ell$-adic cycle class map is injective on torsion when $i=2$ and the field $k$ is finite. 

  \Cref{schreieder} fits into a constellation of conjectural integral refinements of well-known rational cycle conjectures. 
  These questions go back at least to Totaro \cite{totaro1997torsion}, who suggested that certain Lefschetz-hyperplane properties for Chow groups, originally conjectured rationally by Hartshorne, Nori and Paranjape, should also hold for integral Chow groups. Totaro also showed that Nori connectivity for Chow groups fails on torsion cycles. Later Soul\'e--Voisin \cite{soule2005torsion} showed that Voevodsky's smash nilpotence conjecture fails integrally.
    
    In contrast to these negative results, Schreieder \cite{schreieder2022refined} recently proved that some aspects of the rational conjectures hold in fact integrally. 
    For example, Schreieder proved a torsion analogue of a certain conjecture of Jannsen, asserting that cycles in the kernel of the Abel-Jacobi map have coniveau one; see \cite[Corollary 1.3]{schreieder2022refined}. 
    In his talk at the conference ``G\'eom\'etrie Alg\'ebrique en l'honneur de Claire Voisin,'' held in May 2022 in Paris, he used this result to motivate the general and natural question of to which extent rational cycle conjectures hold integrally, and in particular \Cref{schreieder}.  
    
	\medskip
	
	The purpose of the present work is to show that \Cref{schreieder} has a negative answer in general. We offer examples of very different natures:
	topological (Atiyah--Hirzebruch-style approximations of classifying spaces),
	geometric (products of a Kummer threefold and an elliptic curve),
	and arithmetic (quadrics, norm varieties). As we explain below, our examples exhibit new and interesting behavior of the coniveau filtration on Chow groups and of Schreieder's transcendental Abel-Jacobi map over finitely generated fields.

 	\begin{thm}[\Cref{complete-intersection2}]\label{complete-intersection}
	There exist a finite field (resp. a number field) $k$ and a smooth complete intersection $Y\subset \P^N_k$ of dimension $15$ with a free action of a finite $2$-group $G$ such that, letting $X\coloneqq Y/G$, the cycle class map
	\[
	\cl\colon CH^3(X)[2]\rightarrow H^6(X,\Z_2(3))
	\]
	is not injective.
	\end{thm}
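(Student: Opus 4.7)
The strategy is the Atiyah--Hirzebruch--Totaro one: realize $X$ as a smooth projective approximation of a classifying space $BG$ for a suitable finite $2$-group $G$ and reduce the problem to a statement about $CH^3(BG)$. The heart of the argument is to exhibit a finite $2$-group $G$ admitting a faithful $k$-rational representation $V$ together with a non-zero $2$-torsion class $\beta$ in Totaro's Chow ring $CH^3(BG)$ whose image under $\cl\colon CH^3(BG)\to H^6(BG,\Z_2(3))$ vanishes. A natural construction of such a $\beta$ is as a polynomial in Chern classes of sub-representations of $V$ whose Chow-theoretic relations force it to be $2$-torsion, while extra Steenrod-type relations in $H^*(BG,\Z_2)$---invisible to the Chow ring---trivialize its cycle class. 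Identifying an explicit $(G,V,\beta)$ realising all three properties simultaneously in codimension exactly $3$ is the cohomological crux of the proof.

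Once $(G,V,\beta)$ is in hand, I would run Totaro's equivariant approximation: by replacing $V$ with a direct sum of copies, I may assume that the open subset $U \subset V$ on which $G$ acts freely has complement of arbitrarily large codimension, so that for $i \leq 3$ the natural maps $CH^i(BG) \to CH^i(U/G)$ and $H^{2i}(BG,\Z_2(i)) \to H^{2i}((U/G)_{k_s},\Z_2(i))$ are isomorphisms, and $\beta$ becomes a genuine algebraic $2$-torsion class on $U/G$ with trivial $\ell$-adic cycle class. To pass to a projective variety I would work inside a $G$-equivariant projective compactification $\P(V \oplus k)$ and, by an equivariant Bertini argument over $k$, produce a smooth $G$-invariant complete intersection $Y \subset \P^N_k$ of dimension $15$ contained in $U$ with free $G$-action. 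Setting $X := Y/G$, the Grothendieck--Lefschetz hyperplane theorem gives injectivity of the restriction $H^6((U/G)_{k_s},\Z_2(3)) \to H^6(X_{k_s},\Z_2(3))$, so by functoriality the pullback $\beta_X \in CH^3(X)$ is $2$-torsion with vanishing cycle class in $H^6(X,\Z_2(3))$.

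It remains to show that $\beta_X \neq 0$ in $CH^3(X)$, which is what actually exhibits the non-injectivity; unlike the Lefschetz statement on cohomology, there is no general reason for $CH^3(U/G) \to CH^3(X)$ to be injective. I would address this by producing an explicit geometric invariant of $\beta_X$ that remains non-zero on $X$---for instance an intersection or pushforward pairing with an auxiliary cycle constructed from Chern classes of $V$, or a specialization argument from characteristic zero. For the field of definition: a number field $k$ is immediate once $V$ descends to $\Q$, while for a finite field $\F_q$ it suffices to choose $q$ in an arithmetic progression so that $V$ and the Bertini equations descend. The main obstacle I anticipate is the cohomological first step---constructing $G$ and $\beta$ with the required $2$-torsion vanishing in codimension $3$---and the secondary difficulty is the explicit non-vanishing of $\beta_X$ after the complete-intersection cut.
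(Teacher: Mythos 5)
The central gap is in the sentence where you conclude that $\beta_X$ has ``vanishing cycle class in $H^6(X,\Z_2(3))$'': everything you have established up to that point is vanishing in the \emph{geometric} group $H^6(X_{k_s},\Z_2(3))$, and over a finitely generated field this does not imply vanishing in Jannsen's continuous cohomology $H^6(X,\Z_2(3))$. Geometric vanishing only places $\cl(\beta_X)$ in the first step $F^1$ of the Hochschild--Serre filtration, whose graded pieces are subquotients of $H^p(k,H^{6-p}(X_{k_s},\Z_2(3)))$ and are typically large; killing the resulting obstructions is the actual arithmetic content of the theorem, and it is exactly why the statement only asserts the existence of \emph{some} finite field or number field $k$ rather than working over $\F_p$ or $\Q$. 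The paper deals with this at the level of $U/G$ (its Lemma on $U/G$, part (b)): since $H^j((U/G)_{k_s},\Z_2(3))$ is group cohomology of the finite group $G$ for $1\le j\le 6$, hence finite, each graded obstruction is a class in Galois cohomology with finite coefficients and dies after a finite extension of the base field (Serre); climbing the filtration through finitely many finite extensions produces the field $k$ over which $\cl(\alpha_k)=0$ in $H^6((U/G)_k,\Z_2(3))$. This is not a formal limit argument, because continuous $\ell$-adic cohomology does not commute with inverse limits of schemes. Without this step your argument only disproves injectivity of the composite $CH^3(X)[2]\to H^6(X_{k_s},\Z_2(3))$, which is Totaro's (and Quick's) theorem, not the statement at issue.

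Two further points, less fatal but still unresolved in your plan. First, you leave both ``cruxes'' open, whereas the paper imports them from Totaro and Quick: the group is $G=H\times\Z/2$ with $H$ the Heisenberg group of order $32$, the class is $\alpha=Cc_1\in CH^3(BG)$ with $C=c_2(A\circ\varphi)-c_2(B\circ\varphi)$, and the non-vanishing of the induced class on the Godeaux--Serre quotient $X=Y/G$ is proved via complex cobordism $MU^6(X)\otimes_{MU^*(X)}\Z$ (\'etale cobordism in positive characteristic), combined with Lecomte's rigidity of torsion in Chow groups to change algebraically closed fields; a naive intersection or pushforward pairing will not detect this $2$-torsion class, so your proposed fix for $\beta_X\neq 0$ would not go through as stated. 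Second, instead of cutting $Y$ inside $U$ and invoking Lefschetz (which, as you note, says nothing about Chow groups), the paper compares $X$ with $U/G$ through the correspondence $(Y\times U)/G$: its projection $\pi_1$ to $X$ is a dense open subscheme of a vector bundle with complement of codimension $\ge 4$, so $\pi_1^*$ is an isomorphism on both $CH^3$ and $H^6(-,\Z_2(3))$, and the class is transported as $\beta=(\pi_1^*)^{-1}\pi_2^*(\alpha_k)$ with no injectivity question for $CH^3(U/G)\to CH^3(X)$ ever arising.
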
 
 	
 	The aforementioned result of Colliot-Th\'el\`ene--Sansuc--Soul\'e shows that $3$ is the least possible codimension in which one can find a torsion counterexample over a finite field. 
	
	The dimension of the examples of \Cref{complete-intersection} is quite large. The following theorem yields examples of smaller dimension over a number field.
  
  \begin{thm}[\Cref{thm-fourfold2}]\label{thm-fourfold}
There exist a number field $k$ and 
a fourfold product $X=Y\times E$ over $k$, where $Y$ is a Kummer threefold and $E$ is an elliptic curve,
such that
the cycle class map \[\cl:CH^3(X)[2]\rightarrow H^6(X,\Z_2(3))\] is not injective.
\end{thm}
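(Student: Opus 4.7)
The plan is to exhibit over a suitable number field $k$ an explicit $2$-torsion codimension-$3$ cycle $\gamma$ on $X = Y \times E$ such that $\cl(\gamma) = 0$ in $H^6(X, \Z_2(3))$ but $\gamma \neq 0$ in $CH^3(X)$. The role of the Kummer threefold $Y = \widetilde{A/\{\pm 1\}}$, obtained by blowing up the $64$ fixed points of the involution $\iota = -1$ on an abelian threefold $A/k$, is to provide a rich supply of $2$-torsion classes arising from $\iota$-symmetric cycles on $A$ and the exceptional divisors indexed by $A[2]$; the role of $E$ is to pair such classes with low-dimensional classes on the elliptic factor, bumping the codimension to $3$ while preserving the delicate cohomological vanishing. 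Usefully, $X$ can be viewed as the natural resolution of the quotient $(A \times E)/(\iota \times \id)$, so $\iota$-antisymmetric cycles on $A \times E$ are a natural source of candidates.

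First I would choose $A/k$ and $E/k$ with enough $k$-rational $2$-torsion, for instance $A = E_1 \times E_2 \times E_3$ with each $E_i$ having all its $2$-torsion $k$-rational. I would then write $\gamma$ as the pushforward to $X$ of a carefully chosen $\Z$-linear combination of products of cycles on the factors, engineered so that $\gamma \in CH^3(X)[2]$ and so that its image in $H^6(X, \Z_2(3))$ vanishes. The cohomological vanishing is verified via the K\"unneth decomposition of $H^6(X_{\ov k}, \Z_2(3))$---only the summands $H^6(Y_{\ov k}, \Z_2(3)) \otimes H^0(E_{\ov k})$ and $H^4(Y_{\ov k}, \Z_2(2)) \otimes H^2(E_{\ov k}, \Z_2(1))$ intervene---combined with a Hochschild-Serre analysis to ensure the integral refinement. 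The very definition of $\gamma$ is dictated by making each relevant term vanish simultaneously.

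The main obstacle is proving $\gamma \neq 0$ in $CH^3(X)$. My plan is to detect $\gamma$ via a cohomological invariant sensitive enough to see beyond the integral cycle class: either Schreieder's transcendental Abel-Jacobi map, which is designed to capture exactly this kind of $2$-torsion phenomenon, or a non-trivial unramified class $\xi \in H^3_{\nr}(X, \Q_2/\Z_2(2))$ (for instance a cup product of a $2$-torsion Brauer class on $Y$ with a class associated with a $2$-torsion point $P \in E(k)$), paired with $\gamma$ to yield a non-zero element. The pairing computation reduces to an explicit Galois-theoretic calculation on $A$ and $E$, and its outcome dictates the fine arithmetic choice of $A$, $E$, and $k$. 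If a direct computation proves intractable, one may resort to a specialization argument over a prime of $\O_k$, combined with the topological framework used in \Cref{complete-intersection}, to transfer non-triviality from a finite-field setting.
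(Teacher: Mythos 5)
There is a genuine gap: your proposal never actually produces the cycle $\gamma$ --- both key steps are deferred to ``engineering'', and the detection tools you propose do not work as stated. For the non-vanishing step, Schreieder's transcendental Abel--Jacobi map cannot be relied upon in codimension $3$: it is only known to be injective for $i=2$, and for the very examples of this theorem it becomes non-injective after a finite extension (see \cref{rem-taj}, which rests on the vanishing of $\varinjlim_k\lambda_{\mathrm{tr},k}$ by weight arguments), so using it would require a hard, unspecified computation of $N^{2}H^{5}(X,\Q_2(3))$ over a number field. The ``pairing'' of a $2$-torsion $1$-cycle with a class in $H^3_{\nr}(X,\Q_2/\Z_2(2))$ is not a well-defined invariant of rational equivalence classes that you construct, and the fallback ``specialization plus the topological framework of \cref{complete-intersection}'' has no analogue here: the non-triviality in that theorem comes from Totaro's complex-cobordism obstruction for approximations of $BG$, which does not transfer to a Kummer threefold times an elliptic curve. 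For the vanishing step, checking $\cl(\gamma)=0$ \emph{integrally} in $H^6(X,\Z_2(3))$ over a number field via Hochschild--Serre is exactly the difficulty flagged in the introduction (the graded pieces are huge and not torsion-free here), and your K\"unneth list also omits the summand $H^5(Y)\otimes H^1(E)$. Finally, your arithmetic choice $A=E_1\times E_2\times E_3$ with rational $2$-torsion plays no role in any known mechanism forcing a non-trivial kernel; the paper instead needs $B\times E$ to have good \emph{ordinary} reduction at a prime dividing $2$, which is what makes the Bloch--Esnault coniveau theorem applicable.

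The paper's route is structurally different and avoids both of your sticking points at once. It never exhibits an explicit cycle: \cref{lem-uc4} proves $H^4_{\nr}(X_\C,\Z/2)\neq 0$ by combining Bloch--Esnault (non-trivial coniveau on $H^4(A_\C,\Z/2)$, using ordinary reduction at $2$) with Diaz's quotient argument for the Kummer construction; \cref{prop-deligne} then converts this, via the Voisin--Ma exact sequence, the vanishing $H^4_{\nr}(X_\C,\Z)=0$ (decomposition of the diagonal, since $CH_0(Y_\C)$ is supported on a surface, plus Bloch--Kato) and the torsion-freeness of $H^5(X_\C,\Z)$, into non-injectivity of the Deligne cycle class map on $CH^3(X_\C)\{2\}$, i.e.\ of Bloch's map $\lambda$. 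Rigidity of $\lambda$ moves this to $\ov\Q$, and \cref{thm-blochjannsen} (the Bockstein comparison $\varinjlim\cl_k=\beta\circ\lambda$ on torsion together with a weight argument) shows that non-injectivity of $\lambda$ over $\ov k_0$ forces non-injectivity of the integral $2$-adic cycle class map over some finite extension $k$. If you want to salvage your plan, you would need either to supply a genuinely new explicit construction with a workable detector, or to reorganize it along these lines.
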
 

  The examples of \Cref{thm-fourfold} are the counterexamples of smallest dimension that we could find over number fields. 
  Over a field of transcendence degree $1$ over $\Q$, 
  we provide examples of one dimension lower, in one codimension lower.
  Recall that a global field is said to be totally imaginary if it admits no real places. 
    
    \begin{thm}[\Cref{quadric2}]\label{quadric}
    Let $k$ be a totally imaginary number field
    and $k(t)$ be a purely transcendental extension of $k$ of transcendence degree $1$.
    There exists a smooth quadric hypersurface $X\subset \P^4_{k(t)}$ such that the cycle class map
    \[
    \cl\colon CH^2(X)[2]\rightarrow H^4(X,\Z_2(2))
    \]
    is not injective.
\end{thm}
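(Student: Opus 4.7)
The plan is to exhibit an explicit smooth quadric threefold $X \subset \P^4_{k(t)}$ together with a nontrivial $2$-torsion class $\alpha \in CH^2(X)$ whose $\ell$-adic cycle class in $H^4(X,\Z_2(2))$ vanishes. A natural geometric setup: pick a nontrivial quaternion algebra $(a,b)$ over $k$ and a suitable $f(t) \in k(t)^\times$, and let $X$ be the smooth quadric defined by the $5$-dimensional form $q = \langle 1, -a, -b, ab, -f(t)\rangle$. Then $X$ has Clifford invariant $[a,b] \in \Br(k(t))$ and discriminant $f(t)$, which together create the arithmetic structure needed to produce interesting torsion in $CH^2(X)$. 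The role of the quaternion piece is to act as the ``spin norm'' part governing the torsion in $CH^2$, while $f(t)$ controls the residue data at places of $k(t)$.

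Next, I would analyze $H^4(X,\Z_2(2))$. Since $k$ is totally imaginary, $\cd_2(k) \le 2$ and hence $\cd_2(k(t)) \le 3$. In the Hochschild--Serre spectral sequence
\[
E_2^{p,q} = H^p(k(t), H^q(X_{\overline{k(t)}}, \Z_2(2))) \Rightarrow H^{p+q}(X, \Z_2(2)),
\]
the only potentially nonzero terms in total degree $4$ are $E_2^{0,4} = \Z_2$ (torsion-free), $E_2^{2,2}$ (essentially $\Br(k(t))\{2\}$), and $E_2^{4,0}$, whose size is tightly constrained by the cohomological dimension. Hence the $2$-torsion of $H^4(X, \Z_2(2))$ is built from pieces of $\Br(k(t))\{2\}$ together with a subquotient of $H^4(k(t),\Z_2(2))$. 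The image of a $2$-torsion cycle class under the projection to these pieces is Brauer-theoretic in nature and is controlled by the Clifford invariant together with the residues of $f$.

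To construct $\alpha$, I would use a description of $CH^2$ of quadrics à la Karpenko or Kahn--Rost--Sujatha, which identifies $CH^2(X)_{\tors}$ with a Galois cohomology group of $k(t)$ built from the even Clifford algebra of $q$, and exhibit an explicit $2$-torsion class tied to the Clifford structure. By a careful choice of $f(t)$ --- for instance, by prescribing its factorization into linear pieces so that certain residues cancel --- one can ensure that the image of $\alpha$ under $\cl$ lies in a piece of the Hochschild--Serre filtration that vanishes. To then prove $\alpha \neq 0$, one must invoke a finer invariant than the cycle class map: the natural tool here is Schreieder's transcendental Abel--Jacobi map, or an unramified cohomology/Bloch--Ogus argument, detecting $\alpha$ in a refined invariant not captured by $H^4(X, \Z_2(2))$.

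The main obstacle is arranging the arithmetic of $(a,b,f)$ so that both conditions hold simultaneously: $\cl(\alpha) = 0$, which forces Clifford and residue invariants to cancel against each other, and $\alpha \neq 0$, which requires that a refined invariant be nonzero. These conditions push in opposite directions, and balancing them is the heart of the argument. The totally imaginary hypothesis on $k$ is essential here: it ensures $\cd_2(k) \le 2$, truncating the Hochschild--Serre filtration at precisely the level needed to annihilate the arithmetic part of $H^4(X, \Z_2(2))$ that would otherwise detect $\alpha$, while still leaving room for the refined transcendental invariant of Schreieder to be nonzero.
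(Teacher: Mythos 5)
Your skeleton (an explicit Pfister-neighbor quadric, torsion in $CH^2$ via Karpenko-type results, the Hochschild--Serre spectral sequence, and the totally imaginary hypothesis entering through cohomological dimension) is the same as the paper's, but both steps that actually carry the proof are missing or misdirected. For the vanishing $\cl(\alpha)=0$: you identify $E_2^{2,2}$ with ``essentially $\Br(k(t))\{2\}$'' and propose to choose $f$ so that Clifford/residue data cancel and the image of $\cl(\alpha)$ lands in a vanishing graded piece. In continuous cohomology $E_2^{2,2}=H^2(k(t),\Z_2(1))\simeq T_2(\Br(k(t)))$ is the \emph{Tate module} of the Brauer group, which is torsion-free (\Cref{tate-module}), not a torsion Brauer-type group; and the paper never computes the image of $\cl(\alpha)$ in any graded piece (exactly the step described in the introduction as hard, cf.\ Jannsen's letter). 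Instead it shows that \emph{every} graded piece in total degree $4$ (on the Rost-motive summand) is torsion-free: $E^{1,3}=E^{3,1}=0$ because the geometric odd cohomology vanishes, $E^{0,4}$ and $E^{2,2}$ are torsion-free as above, and $E^{4,0}=H^4(k(t),\Z_2(2))=0$; since the differentials are torsion (Jannsen/Ekedahl), the torsion class $\cl(\alpha)$ dies automatically, with no cancellation to arrange. Note also that $\cd_2(k(t))\le 3$ alone does not control $E_2^{4,0}$: the continuous group $H^4(k(t),\Z_2(2))$ has a potential ${\varprojlim}^1 H^3(k(t),\mu_{2^n}^{\otimes 2})$ contribution, and killing it is where the totally imaginary hypothesis is genuinely used, via Albert--Brauer--Hasse--Noether divisibility of $\Br(k(x))$ for residue fields of points of $\A^1_k$ and the Faddeev sequence (\Cref{h4-vanishes-kt}).

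For the nonvanishing $\alpha\neq 0$: your appeal to Schreieder's transcendental Abel--Jacobi map or an unramified-cohomology invariant is both unnecessary and unsupported. Once one takes $q=\langle 1,-u,-\pi,u\pi,t\rangle$ with the quaternion form $\langle 1,-u,-\pi,u\pi\rangle$ anisotropic over $k$ (anisotropy over $k(t)$, which you never verify for your $(a,b,f)$, follows from Springer's theorem over $k(\!(t)\!)$ and is needed for there to be any torsion at all), Karpenko's theorem gives $CH^2(X)_{\tors}\simeq\Z/2$ outright, so the generator is nonzero by fiat. In the paper the relation with $\lambda_{\mathrm{tr}}$ runs the other way: nonvanishing of $\lambda_{\mathrm{tr}}$ is a \emph{corollary} of the counterexample together with Schreieder's injectivity in codimension $2$, and you offer no independent way to compute $\lambda_{\mathrm{tr}}(\alpha)$ or a refined unramified class. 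As written, neither half of the argument is established.
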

	We also show that, if $\ell$ is an odd prime invertible in $k$, there exists a norm variety $X$ of dimension $\ell^2-1$ over $k(t)$ such that $\cl\colon CH^2(X)[\ell]\rightarrow H^4(X,\Z_\ell(2))$ is not injective. Thus \Cref{schreieder} has a negative answer for all prime numbers $\ell$.
	
	\medskip

    We now explain the relation of our examples to Schreieder's results on the coniveau filtration on Chow groups. 
    By now, we have a good understanding of the filtration over the complex numbers, especially for codimension $\leq 3$; see after \cite[Corollary 1.2]{schreieder2022refined}.
    Our examples show that this filtration is still interesting when $k$ is of finite type over its prime field.
    We also relate our examples to the transcendental Abel-Jacobi map on torsion cycles constructed by Schreieder \cite[\S 7.5]{schreieder2022refined}. 

    \begin{rem}
    We denote by $N^{\cdot}CH^i(X)$ the coniveau filtration on Chow groups \cite[\S 1.1]{schreieder2022refined}, 
    and by $H^i_{j,\on{nr}}(X,-)$ Schreieder's refined unramified cohomology \cite[\S 5]{schreieder2022refined}, which for $j=0$ coincides with the ordinary unramified cohomology: $H^i_{0,\on{nr}}=H^i_{\on{nr}}$.
    In the following, we assume that $k$ is of finite type over its prime field.
    
    (a) 
    We have $N^{i-1}CH^i(X)\otimes_\Z{\Z_\ell}=0$ for all smooth projective $k$-varieties $X$ by Jannsen's result; see \cite[Lemma 7.5(2)]{schreieder2022refined}.
    The examples of \Cref{quadric} show that $N^{i-2}CH^{i}(X)\otimes_{\Z} \Z_{\ell}$ can be non-zero for $i=2$. 
    (In this case $N^{0}CH^{2}(X)\otimes_{\Z}\Z_{\ell}$ is exactly the kernel of the cycle class map.)
    One can further analyze the torsion part of the stage of the filtration on the examples using the transcendental Abel-Jacobi map, and \cite[Corollary 9.5, Proposition 7.16]{schreieder2022refined} yields
    \[N^1H^3(X,\Q_2/\Z_2(2))_{\on{div}}/N^1H^3(X,\Q_2(2))\otimes \Q_2/\Z_2 \neq 0.\]
    In other words, there is a cohomology class in $H^3(X,\Q_2/\Z_2(2))$ of coniveau $1$, which lifts to a rational class, but not to a rational class of coniveau $1$.    
    
    (b) By \cite[Theorem 1.8]{schreieder2022refined}, the kernel of the cycle class map is given by 
    \[H^{2i-1}_{i-2,\on{nr}}(X,\Z_\ell(i))/H^{2i-1}(X,\Z_\ell(i)).\]
    \cref{jannsen} asks whether this group is torsion.    
    Our examples of Theorems \ref{complete-intersection}, \ref{thm-fourfold}, and \ref{quadric} show that it can be nonzero for $i=2,3$.
    In the case $i=2$, we get an explicit statement on ordinary unramified cohomology:
    the examples of \Cref{quadric} have an unramified class of degree $3$ which does not extend to a class on all of $X$. In fact, using 
    a restriction-corestriction argument,
    one sees that in this case the inclusion 
    \[H^3(k,\Z_2(2))\subset H^3_{\on{nr}}(X,\Z_2(2))\] 
    has cokernel of finite torsion order $>1$, a phenomenon that does not seem to have been observed before. (In contrast,
    $H^3_{\on{nr}}(X,\Z_2(2))$
    is torsion-free, and $H^3(k,\Z_2(2))$ is a direct summand of $H^3_{\on{nr}}(X,\Z_2(2))$ if $X(k)\neq \emptyset$.)    
    
    (c) 
    In our setting, Schreieder's transcendental Abel-Jacobi map is of the form: 
    $\lambda_{\text{tr}}\colon CH^{i}_{0}(X)\{\ell\}\rightarrow H^{2i-1}(X,\Q_{\ell}/\Z_{\ell}(i))/N^{i-1}H^{2i-1}(X,\Q_{\ell}(i))$,
    where $CH^{i}_{0}(X)\{\ell\}$ is the kernel of $\cl\colon CH^{i}(X)\{\ell\}\rightarrow H^{2i}(X,\Z_{\ell}(i))$.    
    For $i=2$, the transcendental Abel-Jacobi map is injective by \cite[Corollary 9.5]{schreieder2022refined}.
    In particular, the torsion cycles in the examples of \cref{quadric} do not lie in the kernel of $\lambda_{\text{tr}}$.
    This also shows that $\lambda_{\text{tr}}$ can be non-zero.
    In contrast, Theorems \ref{complete-intersection} and \ref{thm-fourfold} provide examples where $\lambda_{\text{tr}}$ is not injective for $i=3$; see \cref{rem-taj}.

    \end{rem}
    
	
	We now comment on the proofs of the main theorems. In view of the discussion around \Cref{jannsen}, it is natural to approach \Cref{schreieder} by considering the filtration $F^{\cdot}$ on $H^{2i}(X,\Z_\ell(i))$ induced by the Hochschild-Serre spectral sequence
	\begin{equation*}\label{hochschild-serre-intro2}E_2^{p,q}=H^p(k,H^q(X_{k_s},\Z_{\ell}(i)))\Rightarrow H^{p+q}(X,\Z_{\ell}(i)).\end{equation*}
	We start with a non-zero torsion cycle $\alpha\in CH^i(X)$ (producing such examples is generally quite difficult) and try to show that $\cl(\alpha)\in F^p$ for all $p\geq 0$. To show that $\cl(\alpha)\in F^1$, we only need to show that $\cl(\alpha)$ is geometrically trivial, but the subsequent steps of the filtration are more difficult because the groups appearing in the spectral sequence are typically huge and the image of $\cl(\alpha)\in F^p/F^{p+1}$ often seems hard to compute; see \cite{jannsen2000letter} for the case $p=2$. In the examples used to prove  Theorems \ref{complete-intersection} and \ref{quadric}, we get around this by showing that all $F^p/F^{p+1}$ are torsion-free, which forces $\cl(\alpha)=0$.
	
	\Cref{thm-fourfold} lies deeper. A key result (\cref{thm-blochjannsen}), relating injectivity of the $\ell$-adic cycle class map to that of Bloch's map,
	reduces Theorem \ref{thm-fourfold}
	to finding fourfold examples defined over a number field where the Deligne cycle class map is not injective on torsion in codimension $3$.
	We then achieve this in two steps: a result of Bloch--Esnault yields examples defined over a number field with non-vanishing fourth unramified cohomology group $H^4_{\nr}(X_{\C},\Q_\ell/\Z_\ell(3))$, where, with extra care, one can find such examples with small Chow group of zero-cycles; then using the Bloch--Kato conjecture and a result of Voisin and Ma relating $H^4_{\nr}(X_{\C},\Q_\ell/
	\Z_\ell(3))$ to the kernel of the Deligne cycle class map on torsion in codimension $3$, one deduces the desired non-injectivity. The construction 
	is inspired by the work of Diaz.
	
	\medskip
	
    Our work leads us to the  following questions.

	\begin{q}\label{our-question}
	(a) Is there a smooth projective $d$-dimensional variety $X$ over a field of finite type over its prime field such that the $\ell$-adic map $\cl\colon CH^d(X)\otimes_{\Z}\Z_{\ell}\to H^{2d}(X,\Z_{\ell}(d))$ is not injective? \footnote{After the first version of this manuscript was posted on arXiv, Alexandrou and Schreieder annouced a construction of such $d$-folds for all $d\geq 3$; see \cite[Corollary 1.4]{alexandrou2022bloch}. Later, Colliot-Th\'el\`ene and the first author \cite{colliot2022injectivite} found an example for $d=2$.} 
 
	(b) Let $i$ be either $2$ or $3$. Is there a smooth projective threefold over a number field $k$ such that the $\ell$-adic map $\cl\colon CH^i(X)\otimes_{\Z}\Z_{\ell}\to H^{2i}(X,\Z_{\ell}(i))$ is not injective? What happens over $k=\Q$?
	\end{q}
	
	The paper is organized as follows. In \Cref{section-ci}, we prove \cref{complete-intersection}.
	In \Cref{section-JB}, we prove a key result (\cref{thm-blochjannsen}), relating the injectivity of the $\ell$-adic cycle class map to that of Bloch's map, which is useful in Sections \ref{section-fourfold} and \ref{section-furtherex}.
	As first application, we give a second proof of \cref{complete-intersection}.
	In \Cref{section-fourfold}, we prove \cref{thm-fourfold}.
	In \Cref{section-furtherex}, we construct further examples in codimension $3$ using non-torsion type counterexamples to the integral Hodge and Tate conjectures.
	Finally, in \Cref{section-quadric}, we prove \cref{quadric}.
	
	\subsection*{Notation}
	If $k$ is a field, we write $H^i(k,-)$ for continuous Galois cohomology. If $X$ is a smooth projective $k$-variety, we write $H^i(X,-)$ for the continuous \'etale cohomology, as defined by Jannsen \cite{jannsen1988continuous}, $CH^i(X)$ for the Chow group of codimension $i$ cycles modulo rational equivalence, and
	$\cl$ for the cycle class map in continuous $\ell$-adic cohomology;
	when $k$ is algebraically closed, 
	we write $\lambda$ for Bloch's map.
	If $k=\C$, we denote by $H^i_{\mc{D}}(X,\Z(j))$ the Deligne cohomology group
	and by $\cl_{\mc
	D}$ the Deligne cycle class map;
	for $A\in\{\Z,\Q/\Z,\Z/2\}$, we denote by $H^i_{\on{nr}}(X,A)$ the $i$-th unramified cohomology group.
	
	For an abelian group $A$, an integer $n\geq 1$, and a prime number $\ell$, we denote $A[n]\coloneqq \left\{a\in A\mid na=0\right\}$, by $A\{\ell\}$ the subgroup of $\ell$-primary torsion elements of $A$, by $A_{\tors}$ the subgroup of torsion elements of $A$, and $A_{\tf}\coloneqq  A/A_{\tors}$.

		\section{Proof of Theorem \ref{complete-intersection}}\label{section-ci}
	
	In order to prove \Cref{complete-intersection}, 
	we will make use of a construction due to Totaro \cite{totaro1997torsion}. Totaro's construction is stated over the complex numbers but works over an arbitrary field of characteristic zero. It has been generalized to fields of characteristic not $2$ by Quick \cite{quick2011torsion}.
	
	Let $k_0$ be a field of characteristic different from $2$.  Let $H$ be the Heisenberg group of order $32$ (see \cite[\S 5]{totaro1997torsion}) and set $G\coloneqq H\times \Z/2$. We have a group homomorphism
	\[\varphi\colon G\xrightarrow{\on{pr}_1}H\hookrightarrow \on{SO}_4,\]
	where the map on the right is the Heisenberg representation of $H$; see \cite[\S 5]{totaro1997torsion}. (Totaro works in characteristic zero, but as observed during the proof of \cite[Theorem 7.2]{quick2011torsion}, the Heisenberg representation is defined over any field of characteristic different from $2$.)
	Let $A:\on{SO}_4\to \on{GL}_3$ be the representation given by the composition
	\[\on{SO}_4\twoheadrightarrow \on{SO}_4/\mu_2\xrightarrow{\sim}\on{SO}_3\times\on{SO}_3\xrightarrow{\on{pr}_1} \on{SO}_3\hookrightarrow\on{GL}_3,\]
	and $B:\on{SO}_4\to\on{GL}_4$ be the natural $4$-dimensional representation of $\on{SO}_4$. Define \[C\coloneqq c_2(A\circ\varphi)-c_2(B\circ\varphi)\in CH^2(BG),\] let $c_1\in CH^1(BG)$ be the pullback along the second projection $\on{pr}_2:G\to \Z/2$ of the first Chern-class of the non-trivial character of $\Z/2$, and set \[\alpha\coloneqq Cc_1\in CH^3(BG).\] We have $2\alpha=0$ because $2c_1=0$.
	
	Finally, let $V$ be a  $G$-representation of finite dimension over $k_0$, $U\subset V$ be a $G$-invariant open subscheme of $V$ such that $G$ acts freely on $U$ and the codimension of $V-U$ in $V$ is at least $4$.
	
	\begin{lem}\label{u-mod-g}
	Let $(k_0)_s$ be a separable closure of $k_0$.
	
	(a) We have $\cl(\alpha_{(k_0)_s})=0$ in $H^6((U/G)_{(k_0)_s},\Z_2(3))$.
	
	(b) There exists a finite field subextension $k_0\subset k\subset (k_0)_s$ such that $\cl(\alpha_k)=0$ in $H^6((U/G)_k,\Z_2(3))$.
	\end{lem}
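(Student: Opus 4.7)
The plan for part (a) is to invoke Totaro's theorem \cite[\S 5]{totaro1997torsion}, extended to fields of characteristic different from $2$ by Quick \cite[Theorem 7.2]{quick2011torsion}. These results assert that the cycle class $\cl(\alpha)$ vanishes in $H^6(BG,\Z_2(3))$ over any separably closed base field of characteristic $\neq 2$. Because the codimension of $V\setminus U$ in $V$ is at least $4$, both the restriction $CH^3(BG)\to CH^3(U/G)$ and the analogous map on \'etale cohomology $H^6(BG,\Z_2(3))\to H^6((U/G)_{(k_0)_s},\Z_2(3))$ (with the $BG$ invariants defined via these same approximations) are isomorphisms in the relevant range, so the vanishing transfers to $(U/G)_{(k_0)_s}$.

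For part (b), the plan is to descend the vanishing from $(k_0)_s$ to a finite subextension $k\subset(k_0)_s$ by continuity of \'etale cohomology. Since \'etale cohomology with $\Z/2^n$-coefficients commutes with filtered colimits of base rings, one has
\[
H^6\bigl((U/G)_{(k_0)_s},\Z/2^n(3)\bigr) = \varinjlim_{k_0\subset k\subset (k_0)_s} H^6\bigl((U/G)_k,\Z/2^n(3)\bigr)
\]
for every $n\geq 1$, the colimit being taken over the finite subextensions of $(k_0)_s/k_0$. Since $\alpha$ is already defined over $k_0$, part (a) implies that for each $n$ the mod $2^n$ reduction of $\cl(\alpha_k)$ vanishes in $H^6((U/G)_k,\Z/2^n(3))$ for some finite subextension $k=k_n$.

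The main obstacle is to promote this mod $2^n$ vanishing to genuine $2$-adic vanishing over a single finite subextension. Here the plan is to exploit that $\cl(\alpha_k)$ is $2$-torsion in continuous $\ell$-adic cohomology (because $2\alpha=0$), and to combine this with the Bockstein-type short exact sequence arising from $0\to\Z_2\xrightarrow{2}\Z_2\to\Z/2\to 0$ together with finiteness properties of the $2$-primary torsion of $H^6((U/G)_k,\Z_2(3))$. Once this $2$-primary torsion is bounded by some fixed $2^N$, the mod $2^N$ colimit description yields a single finite subextension $k=k_N$ over which $\cl(\alpha_k)=0$ in $H^6((U/G)_k,\Z_2(3))$. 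The delicate point is the interaction between the direct limit over subextensions and the inverse limit in $n$ defining continuous $\ell$-adic cohomology; the $2$-torsion nature of $\cl(\alpha)$ is what makes this interaction tractable.
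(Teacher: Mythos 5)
Your part (a) is essentially the paper's own argument: Totaro's vanishing over $\C$ transported by the comparison isomorphisms, Quick's extension to characteristic $p\neq 2$, and the identification of $U/G$ with an approximation of $BG$ in degrees $\leq 6$ thanks to $\on{codim}(V-U,V)\geq 4$; nothing to add there. Your continuity step in (b) is also fine: for each $n$ there is a finite subextension $k_n$ over which the mod $2^n$ class vanishes, and the exact sequence coming from $0\to\Z_2\xrightarrow{2^n}\Z_2\to\Z/2^n\to 0$ then gives $\cl(\alpha_{k_n})\in 2^n H^6((U/G)_{k_n},\Z_2(3))$.

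The genuine gap is the promotion step. It rests entirely on the assertion that the $2$-primary torsion of $H^6((U/G)_k,\Z_2(3))$ has exponent bounded by a fixed $2^N$, and — since the field you extract depends on $N$ — you in fact need this bound \emph{uniformly} over all finite subextensions $k\subset(k_0)_s$; otherwise the choices of $N$ and of $k$ chase each other. You neither prove such a bound nor indicate where it would come from, and it is not a formality. Via Hochschild--Serre for $(k_0)_s/k$, the group $H^6((U/G)_k,\Z_2(3))$ has graded pieces which are subquotients of $H^i(k,H^{6-i}((U/G)_{(k_0)_s},\Z_2(3)))$; controlling the pieces with $i\leq 5$ already requires knowing that the geometric groups $H^j((U/G)_{(k_0)_s},\Z_2(3))$ are \emph{finite} for $1\leq j\leq 6$, a fact that needs the geometric input your sketch never invokes (the Galois $G$-cover $U\to U/G$ together with the vanishing of $H^j(U_{(k_0)_s},\Z_2(3))$ for $1\leq j\leq 6$, giving $H^j((U/G)_{(k_0)_s},\Z_2(3))\simeq H^j(G,\Z_2(3))$); and the remaining piece is a quotient of $H^6(k,\Z_2(3))$, whose torsion is not of a priori bounded exponent for an arbitrary field $k$ of characteristic $\neq 2$. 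The paper avoids any uniform bound altogether: after establishing the finiteness of the geometric cohomology in degrees $1$ through $6$, it kills $\cl(\alpha)$ step by step along the Hochschild--Serre filtration over $k$, using Serre's theorem that positive-degree Galois cohomology classes with finite coefficients die after a finite extension, so that seven successive finite extensions yield $\cl(\alpha_{k_7})=0$. If you want to salvage your Bockstein route you must prove the uniform exponent bound, and the only visible path to it is exactly this spectral-sequence-plus-finiteness analysis (with extra care at the $H^6(k,\Z_2(3))$-term, where only torsion classes, which lift to $H^5(k,\Q_2/\Z_2(3))$, can be killed by finite extensions) — at which point the paper's direct filtration argument is both shorter and complete.
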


	\begin{proof}
	Since the codimension of $V-U$ in $V$ is at least $4$,  we have \[CH^3(U/G)=CH^3(BG);\]
    see \cite[Definition 1.2]{totaro1999chow}.
    
    (a) By the invariance of \'etale cohomology under purely inseparable field extensions, it suffices to show that $\cl(\alpha_{\ov{k}_0})=0$ in $H^6((U/G)_{\ov{k}_0},\Z_2(3))$, where $\ov{k}_0$ is an algebraic closure of $k_0$ containing $(k_0)_s$. If $k_0=\C$, the map $U/G\to BG$ corresponding to the principal $G$-bundle $U\to U/G$ induces an isomorphism $H^6((U/G)_{\C},\Z)\xrightarrow{\sim} H^6(BG,\Z)$, and the cycle class of $\alpha$ in $H^6(BG,\Z)$ is zero as stated in \cite[p. 485]{totaro1997torsion}, hence the cycle class of $\alpha$ in $H^6((U/G)_{\C},\Z)$ vanishes. Since Artin's comparison isomorphism is compatible with cycle classes in singular and $\ell$-adic cohomology, this implies that (a) holds for $k_0=\C$. If $k_0$ is an arbitrary field of characteristic zero, then (a) follows from the case $k_0=\C$ and the invariance of $\ell$-adic cohomology under extensions of algebraically closed fields. Finally, if $k_0$ is an arbitrary field of characteristic different from $2$, the arguments of Totaro have been adapted by Quick using \'etale cobordism; see the proof of \cite[Proposition 5.3]{quick2011torsion}. One could also argue more directly via a specialization argument from the characteristic zero case. This completes the proof of (a).
	
	(b) The morphism $U_{(k_0)_s}\to (U/G)_{(k_0)_s}$ is a Galois $G$-cover, hence we have the Hochschild-Serre spectral sequence in $\ell$-adic cohomology
	\begin{equation}\label{hochschild-serre2}E_2^{i,j}=H^i(G,H^j(U_{(k_0)_s},\Z_2(3)))\Rightarrow H^{i+j}((U/G)_{(k_0)_s},\Z_2(3)).\end{equation}
	Here $H^i(G,-)$ denotes group cohomology. Since $U$ is an open subscheme of a vector space whose complement has codimension $\geq 4$, we have $H^0(U_{(k_0)_s},\Z_2(3))=\Z_2(3)$ and $H^j(U_{(k_0)_s},\Z_2(3))=0$ for all $1\leq j\leq 6$. We deduce that the natural map $H^i(G,\Z_2(3))\to H^i((U/G)_{(k_0)_s},\Z_2(3))$ is an isomorphism for all $1\leq i\leq 6$.  Since the group $G$ is finite, the group \[H^i(G,\Z_2(3))\simeq H^i(G,\Z_2)\simeq H^i(G,\Z)\otimes_{\Z}\Z_2\] is finite for all $i\geq 1$, hence 
	\begin{equation}\label{finite-coh}
	    \text{$H^i((U/G)_{(k_0)_s},\Z_2(3))$ is finite for all $1\leq i\leq 6$.}
	\end{equation}
	
	For every finite field subextension $k_0\subset k\subset (k_0)_s$, the Hochschild-Serre spectral sequence in continuous $\ell$-adic cohomology 
		\begin{equation}\label{hochschild-serre}E_2^{i,j}=H^i(k,H^j((U/G)_{(k_0)_s},\Z_2(3)))\Rightarrow H^{i+j}((U/G)_k,\Z_2(3))\end{equation}
		yields a filtration
		\[\{0\}=F^7\subset F^6\subset\cdots \subset F^1\subset F^0=H^6((U/G)_k,\Z_2(3))\]
		where $F^i/F^{i+1}$ is a subquotient of $H^i(k,H^{6-i}((U/G)_{(k_0)_s},\Z_2(3)))$. When $i=0,1$, $F^i/F^{i+1}$ is even a submodule of   $H^i(k,H^{6-i}((U/G)_{(k_0)_s},\Z_2(3)))$.

	It is a consequence of \cite[I.2.2, Corollary 1]{serre1997galois} that for all $i\geq 1$ and all  finite continuous $\on{Gal}((k_0)_s/k)$-modules $M$, any element of $H^i(k,M)$ is killed by passage to a suitable finite extension of $k$. Thus (\ref{finite-coh}) implies that for all $1\leq i\leq 6$, any element of $H^i(k,H^{6-i}((U/G)_{(k_0)_s},\Z_2(3)))$ vanishes after base change to a suitable finite extension of $k$. By (a), we know that $\cl(\alpha_{(k_0)_s})=0$, that is, $\cl(\alpha)\in F^1$. Using the fact that $F^i/F^{i+1}$ is a subquotient of $H^i(k,H^{6-i}((U/G)_{(k_0)_s},\Z_2(3)))$, we may now construct finite field extensions \[k_0\subset k_1\subset \dots\subset k_6\subset k_7\] such that $\cl(\alpha_{k_i})\in F^i$ for all $i$. In particular, $\cl(\alpha_{k_7})=0$, hence $k=k_7$ satisfies the conclusion of the lemma.
	\end{proof}

 \begin{rem}
    It is important to note that continuous $\ell$-adic cohomology does not commute with inverse limits of schemes, so (b) is not a formal consequence for (a). 
	\end{rem}
	
	Here is a generalized version of \cref{complete-intersection}.
	
	\begin{thm}\label{complete-intersection2}
	Let $k_0$ be a field of characteristic different from $2$. There exist a finite $2$-group $G$, a smooth complete intersection $Y\subset \P_{k_0}^N$ of dimension $15$ with a free $G$-action and finite extension $k/k_0$ such that, letting $X\coloneqq Y/G$, the cycle class map \[\cl\colon CH^3(X_k)[2]\to H^{6}(X_k,\Z_2(3))\] is not injective.
	\end{thm}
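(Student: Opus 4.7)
The plan is to reduce Theorem~\ref{complete-intersection2} to the quasi-projective case treated in \cref{u-mod-g} via a Bertini-type compactification. First I would produce a smooth $G$-invariant complete intersection $Y \subset \P^N_{k_0}$ of dimension $15$ whose quotient $X = Y/G$ is smooth projective and carries a well-controlled pullback of the class $\alpha$. Then I would verify the non-vanishing of $\alpha_X$ together with the vanishing of its cycle class, by combining \cref{u-mod-g} with Hochschild--Serre for the $G$-cover $Y \to X$ and weak Lefschetz for $Y$.

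For the Bertini step, I would take a faithful linear $G$-representation $W$ over $k_0$ (available since $\Char k_0 \neq 2$ and $G$ is a finite $2$-group), and replace $W$ by a sufficiently large direct sum $W^{\oplus m}$ so that the $G$-stable open $U_W \subset \P(W)$ on which $G$ acts freely has complement of codimension at least $20$ in $\P(W)$. After possibly passing to a finite extension $k_1/k_0$, an equivariant Bertini argument applied to a sufficiently positive $G$-equivariant linear system on $\P(W)$ produces a smooth $G$-invariant complete intersection $Y \subset U_W$ of dimension $15$ with free $G$-action. Setting $X := Y/G$, the embedding $X \hookrightarrow U_W/G$ combined with Totaro's approximation $CH^3(U_W/G) = CH^3(BG)$ (valid because the complement of $U_W$ has codimension far exceeding $3$; see \cite[Definition 1.2]{totaro1999chow}) makes $\alpha$ restrict to a $2$-torsion class $\alpha_X \in CH^3(X)$. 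Non-vanishing of $\alpha_X$ follows from non-vanishing of $\alpha$ in $CH^3(BG) \otimes_\Z \Z_2$, established by \cite{totaro1997torsion} and extended to characteristic $\neq 2$ by \cite{quick2011torsion}.

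For the cohomological step, the same proof as in \cref{u-mod-g}(a), applied to $U_W/G$ rather than $U/G$, gives $\cl(\alpha) = 0$ in $H^6((U_W/G)_{(k_1)_s}, \Z_2(3))$, and functoriality of the cycle class map under pullback along $X_{(k_1)_s} \hookrightarrow (U_W/G)_{(k_1)_s}$ yields $\cl(\alpha_X) = 0$ in $H^6(X_{(k_1)_s}, \Z_2(3))$. To descend to a finite extension $k/k_1$, I would combine two spectral sequences: Hochschild--Serre for the Galois $G$-cover $Y_{(k_1)_s} \to X_{(k_1)_s}$, together with weak Lefschetz identifying $H^b(Y_{(k_1)_s}, \Z_2(3))$ with $H^b(\P^N_{(k_1)_s}, \Z_2(3))$ for $b \leq 14$, implies that the $2$-torsion subgroup of $H^6(X_{(k_1)_s}, \Z_2(3))$ is finite. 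The Galois Hochschild--Serre argument of \cref{u-mod-g}(b), via Serre's result \cite[I.2.2, Corollary 1]{serre1997galois}, then kills the $2$-torsion class $\cl(\alpha_X)$ after passage to a suitable finite extension $k/k_1$.

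The main obstacle I anticipate is the Bertini step: one must simultaneously guarantee smoothness of $Y$, freeness of the induced $G$-action, and rationality over a \emph{finite} extension of $k_0$ (most delicate when $k_0$ is finite, where classical Bertini fails and one must invoke an arithmetic density argument). The subsequent cohomological argument is essentially a direct adaptation of \cref{u-mod-g}, using that $2$-torsion in the relevant $\ell$-adic cohomology group remains finite despite the passage from quasi-projective to projective.
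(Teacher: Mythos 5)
The central gap is in your non-vanishing step. You define $\alpha_X$ by restricting $\alpha$ along the closed embedding $X\hookrightarrow U_W/G$ and then assert that $\alpha_X\neq 0$ ``follows from non-vanishing of $\alpha$ in $CH^3(BG)\otimes_\Z\Z_2$.'' That implication does not hold: the restriction $CH^3(U_W/G)\to CH^3(X)$ along a high-codimension embedding has no injectivity property, and there is no cohomological obstruction to $\alpha_X$ dying, since $\cl(\alpha)$ already vanishes on $BG$. Surviving restriction to the Godeaux--Serre quotient is exactly the hard point of Totaro's theorem: he detects the class on $X$ itself (not on $BG$) in $MU^6(X)\otimes_{MU^*}\Z$, Quick does the analogue in characteristic $p$ via \'etale cobordism, and one needs Lecomte's rigidity of torsion in Chow groups to transport the statement between algebraically closed fields and down to $k$. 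This is how the paper argues (it cites \cite[Theorem 7.2]{totaro1997torsion}, \cite[Proposition 5.3(b)]{quick2011torsion} and \cite{lecomte1986rigidite} precisely for the class on $X$); your proposal omits this detection entirely, so the non-injectivity of $\cl$ is not established. (The paper also avoids your equivariant Bertini discussion by quoting Serre's construction \cite[Proposition 15]{serre58charp} for $Y$, and it transfers $\alpha$ to $X$ through $(Y\times U)/G$, where both $\pi_1^*$ maps are isomorphisms by homotopy invariance and \cite[Theorem 3.23]{jannsen1988continuous}, rather than by a Gysin restriction.)

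Your descent step also has a gap as stated. The argument of \cref{u-mod-g}(b) works because \emph{all} the geometric groups $H^{6-i}((U/G)_{(k_0)_s},\Z_2(3))$ for $1\le i\le 6$ are finite, so every class in $H^i(k,H^{6-i})$ dies over a finite extension by Serre's result. For your projective $X$ this hypothesis fails: $H^0$, $H^2$, $H^4$ of $X_{(k_1)_s}$ contain free $\Z_2$-summands (powers of the hyperplane class), so the graded pieces $F^2/F^3$, $F^4/F^5$, $F^6$ are subquotients of Galois cohomology with infinite coefficient modules, and finiteness of the torsion of $H^6(X_{(k_1)_s},\Z_2(3))$ — which is what you verify via Hochschild--Serre and weak Lefschetz — is not the relevant condition. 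One can likely repair this (e.g.\ by showing a torsion class in $H^i(k,M)$, $i\ge 2$, $M$ finitely generated over $\Z_2$, comes from finite coefficients and hence dies after a finite extension, using torsion-freeness statements like \cref{tate-module}), but that argument is not in your proposal, whereas the paper sidesteps the issue entirely by performing the finite-extension descent on $U/G$, where all the relevant geometric cohomology is finite, and only then pulling back to $X$.
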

	\begin{proof}
	Let $Y$ be a smooth complete intersection of dimension $15$ over $k_0$ on which $G\coloneqq H\times \Z/2$ acts freely, and set $X\coloneqq Y/G$: see \cite[Proposition 15]{serre58charp}. Letting $G$ act diagonally on $Y\times U$, the projections of $Y\times U$ onto its factors are $G$-equivariant: we write $\pi_1\colon(Y\times U)/G\to X$ and $\pi_2\colon(Y\times U)/G\to U/G$ for the induced morphisms.	We have a commutative diagram 
	\[
	\begin{tikzcd}
	CH^3(X_k) \arrow[d,"\cl"] \arrow[r, "\pi_1^*"] & CH^3(((Y\times U)/G)_k) \arrow[d,"\cl"]  & CH^3((U/G)_k) \arrow[d,"\cl"]  \arrow[l,swap, "\pi_2^*"] \\
	H^6(X_k,\Z_2(3)) \arrow[r, "\pi_1^*"]  &  H^6(((Y\times U)/G)_k,\Z_2(3))   & H^6((U/G)_k,\Z_2(3)). \arrow[l,swap, "\pi_2^*"]
	\end{tikzcd}
	\]
	The projection $Y\times V\to Y$ is a $G$-equivariant vector bundle and the $G$-action on $Y$ is free, therefore by descent and Grothendieck's version of Hilbert's Theorem 90 the induced morphism $(Y\times V)/G\to X$ is also a vector bundle. Since $Y\times U\to Y$ is a $G$-invariant dense open subscheme of the $G$-equivariant vector bundle $Y\times V\to Y$, 
	$\pi_1$ is a dense open subscheme of a vector bundle. Moreover, since $V-U$ has codimension $\geq 4$ in $V$, the codimension of the complement $(Y\times U)/G$ inside $(Y\times V)/G$ is also $\geq 4$, hence by \cite[Theorem 3.23]{jannsen1988continuous} and homotopy invariance the maps $\pi_1^*$ are isomorphisms. We get a well-defined element \[\beta\coloneqq (\pi_1^*)^{-1}(\pi_2^*(\alpha_k))\in CH^3(X_k)[2].\] By \Cref{u-mod-g}(b) we have $\cl(\beta)=0$. In order to complete the proof, it remains to show that $\beta\neq 0$.
	
	Suppose first that $k=\C$. Then Totaro showed in \cite{totaro1997torsion} that the class of $\beta$ in the complex cobordism group $MU^6(X)\otimes_{MU^*(X)}\Z$ is not zero, hence $\beta\neq 0$. If $k$ is a field of characteristic zero, the rigidity of the $2$-torsion subgroup of the Chow group \cite{lecomte1986rigidite} implies $\beta_{\ov{k}}\neq 0$, hence $\beta\neq 0$. If $k$ has positive characteristic (different from $2$), the arguments of Totaro have been adapted by Quick; see the proof of \cite[Proposition 5.3(b)]{quick2011torsion}. We conclude that $\beta\neq 0$, as desired.
	\end{proof}

	\section{\texorpdfstring{$\ell$}{l}-adic cycle class map and Bloch's map}\label{section-JB}
	
	In this section we explain the relation between the cycle class map in continuous $\ell$-adic cohomology and a certain map defined by Bloch.
	The main result of this section (\cref{thm-blochjannsen}) will be used to produce counterexamples to \cref{schreieder} in Sections \ref{section-fourfold} and \ref{section-furtherex}.
	
	Let $k_0$ be a field, $i\geq 0$ be an integer, $\ell$ be a prime number invertible in $k_0$, and $X$ be a smooth projective $k_0$-variety.
	For every finite extension $k/k_0$, we have the cycle class map
	$\cl_k\colon CH^i(X_k)\otimes_{\Z}\Z_{\ell}\rightarrow H^{2i}(X_k,\Z_\ell(i))$
	and the Bockstein homomorphism
	\[\beta_k\colon H^{2i-1}(X_k,\Q_\ell/\Z_\ell(i))\rightarrow H^{2i}(X_k,\Z_\ell(i)).\]
	It will be important for us that 
	$CH^i(X_{\ov{k}_0})=\varinjlim_{k/k_0}CH^i(X_k)$ and
	\[
	H^{2i-1}(X_{\ov{k}_0},\Q_\ell/\Z_\ell(i))=\varinjlim_{k/k_0}H^{2i-1}(X_{k},\Q_\ell/\Z_\ell(i)),
	\]
	where the direct limits are over all 
	finite extensions $k/k_{0}$
	contained in $\ov{k}_0/k_0$.
	Finally, recall that Bloch \cite{bloch1979torsion} (also see \cite{colliot1993cycles}) defined a map
	\[
	\lambda\colon CH^i(X_{\ov{k}_0})\{\ell\}\rightarrow H^{2i-1}(X_{\ov{k}_0},\Q_\ell/\Z_\ell(i)),
	\]
	which, for $\ov{k}_{0}=\C$, coincides with the Deligne cycle class map on torsion \cite[Proposition 3.7]{bloch1979torsion}.
	Note that $\lambda$ is rigid, that is, it does not change under algebraically closed field extensions, because the rigidity property holds for the torsion part of Chow groups \cite{lecomte1986rigidite} and for \'etale cohomology with torsion coefficients.
	
	\begin{prop}\label{thm-blochjannsen}
	The composition
	\[
	CH^i(X_{\ov{k}_0})\{\ell\}\xrightarrow{\lambda}H^{2i-1}(X_{\ov{k}_0},\Q_\ell/\Z_\ell(i)) \xrightarrow{\varinjlim_{k/k_0}\beta_k}\varinjlim_{k/k_0} H^{2i}(X_k,\Z_\ell(i))
	\]
	coincides with $\varinjlim_{k/k_0} \cl_k$ on torsion.
	If $k_{0}$ is of finite type over its prime field, 
	$\varinjlim_{k/k_0}\beta_k$ induces an isomorphism
	\[
	H^{2i-1}(X_{\ov{k}_0},\Q_\ell/\Z_\ell(i))\xrightarrow{\sim}\left(\varinjlim_{k/k_0} H^{2i}(X_k,\Z_\ell(i))\right)\{\ell\},
	\]
	hence $\varinjlim_{k/k_0}\cl_k$ is injective on torsion if and only if $\lambda$ is injective.
	\end{prop}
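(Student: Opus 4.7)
The plan is to break the proof into three pieces: the definitional compatibility, a long exact sequence analysis, and a crucial vanishing specific to finitely generated fields.

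For the first claim, that $\varinjlim_k\beta_k\circ\lambda$ agrees with $\varinjlim_k\cl_k$ on $\ell$-primary torsion, I would argue directly from Bloch's construction. For $\alpha\in CH^i(X_{\ov{k}_0})[\ell^n]$, the class $\lambda(\alpha)\in H^{2i-1}(X_{\ov{k}_0},\mu_{\ell^n}^{\otimes i})\subset H^{2i-1}(X_{\ov{k}_0},\Q_\ell/\Z_\ell(i))$ is characterized by the property that the Bockstein associated to $0\to\Z_\ell(i)\xrightarrow{\ell^n}\Z_\ell(i)\to\mu_{\ell^n}^{\otimes i}\to 0$ sends it to the mod $\ell^n$ cycle class of $\alpha$. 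Taking the direct limit over $n$ identifies this mod $\ell^n$ Bockstein with $\beta$, yielding the compatibility, which is functorial in $k$.

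For the main isomorphism, I would start from the long exact sequence associated to the short exact sequence of pro-systems $0\to\Z_\ell(i)\to\Q_\ell(i)\to\Q_\ell/\Z_\ell(i)\to 0$:
\[ H^{2i-1}(X_k,\Q_\ell(i))\xrightarrow{p_k}H^{2i-1}(X_k,\Q_\ell/\Z_\ell(i))\xrightarrow{\beta_k}H^{2i}(X_k,\Z_\ell(i))\xrightarrow{q_k}H^{2i}(X_k,\Q_\ell(i)). \]
Since the direct limit is exact and torsion coefficients commute with direct limits of base fields, passing to the limit over finite $k/k_0$ gives an exact sequence whose middle term is $H^{2i-1}(X_{\ov{k}_0},\Q_\ell/\Z_\ell(i))$. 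Two facts then suffice: (i) $\ker(\varinjlim q_k)=(\varinjlim_k H^{2i}(X_k,\Z_\ell(i)))\{\ell\}$, and (ii) $\varinjlim_k\Image(p_k)=0$. Both rely on Jannsen's structural theorems over finitely generated fields: $H^j(X_k,\Z_\ell(i))$ is a finitely generated $\Z_\ell$-module, and $H^j(X_k,\Z_\ell(i))\otimes_{\Z_\ell}\Q_\ell\xrightarrow{\sim}H^j(X_k,\Q_\ell(i))$. These immediately yield (i), since then $\ker q_k=H^{2i}(X_k,\Z_\ell(i))_{\tors}$ and torsion commutes with direct limits, providing surjectivity onto the $\ell$-primary torsion.

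The main obstacle is (ii). Using the same finiteness, $\Image(p_k)$ is identified with the divisible group $H^{2i-1}(X_k,\Z_\ell(i))_{\tf}\otimes_{\Z_\ell}\Q_\ell/\Z_\ell$, so vanishing in the direct limit amounts to showing that every $v\in H^{2i-1}(X_k,\Z_\ell(i))_{\tf}$ becomes divisible by an arbitrary power of $\ell$ after pullback to a sufficiently large finite extension $k'/k_0$. I would establish this via the Hochschild--Serre spectral sequence together with the vanishing of $\varinjlim_{k/k_0}H^p(k,-)$ on suitable continuous Galois modules for $p\geq 1$; this is the natural generalization to higher codimension of Jannsen's codimension-$1$ argument based on Mordell--Weil recalled in the introduction. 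Once this vanishing is in hand, the final equivalence is immediate: the first step gives $\varinjlim_k\cl_k=(\varinjlim_k\beta_k)\circ\lambda$ on torsion, and injectivity transfers between the two maps through the isomorphism $\varinjlim_k\beta_k$.
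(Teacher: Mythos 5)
There are genuine gaps in both halves of your proposal. For the first assertion, you treat the identity $\beta\circ\lambda=\cl$ as a defining property of Bloch's map: but $\lambda$ is constructed via the coniveau (Bloch--Ogus) description, with $CH^i(X_{\ov{k}_0})\{\ell\}$ a quotient of $H^{i-1}(X_{\ov{k}_0},\mc{H}^i(\Q_\ell/\Z_\ell(i)))$, and since the Bockstein is far from injective, "characterized by the property that the Bockstein sends it to the cycle class" is neither a characterization nor a definition --- it is essentially the statement to be proved. (As written it does not even typecheck: the Bockstein of $0\to\Z_\ell(i)\xrightarrow{\ell^n}\Z_\ell(i)\to\mu_{\ell^n}^{\otimes i}\to 0$ takes values in $H^{2i}(X,\Z_\ell(i))$, not in a mod-$\ell^n$ group.) More importantly, the content of the assertion is a compatibility at each finite level $k$, with values in the continuous cohomology groups $H^{2i}(X_k,\Z_\ell(i))$; since continuous $\ell$-adic cohomology does not commute with the limit over $k$ (the paper stresses exactly this), the known geometric compatibility over $\ov{k}_0$ does not formally imply it, and your phrase "functorial in $k$" is precisely where all the work sits. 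The paper proves anticommutativity of the relevant square at each finite level and each $\ell^\nu$ by redoing the Colliot-Th\'el\`ene--Sansuc--Soul\'e support argument inside Jannsen's formalism of inverse systems of sheaves (lifting classes along $\G_m$, Cartan--Eilenberg resolutions); nothing in your first paragraph substitutes for this.

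For the second assertion, your overall skeleton (the $\Z_\ell\to\Q_\ell\to\Q_\ell/\Z_\ell$ long exact sequence, plus facts (i) and (ii)) matches what the paper does implicitly, but the justifications fail. The "structural theorem" that $H^j(X_k,\Z_\ell(i))$ is a finitely generated $\Z_\ell$-module over a finitely generated field is false: already $H^1(k,\Z_\ell(1))$ is the $\ell$-adic completion of $k^\times$, of infinite rank for a number field, and the paper itself uses that $H^2(k(t),\Z_\ell(1))\simeq T_\ell(\Br(k(t)))$ is enormous. Fact (i) can be salvaged without it (the image of $\beta_k$ is torsion and $H^{2i}(X_k,\Q_\ell(i))$ is a $\Q_\ell$-vector space), but your treatment of (ii), which you rightly identify as the main obstacle, is not a proof: the vanishing of $\varinjlim_{k/k_0}H^p(k,-)$ for $p\geq 1$ holds for \emph{finite} Galois modules (restriction--corestriction, as in the paper's Lemma 2.1), not for continuous modules such as $\Z_\ell(1)$ or $H^q(X_{\ov{k}_0},\Z_\ell(i))$ (e.g.\ $\varinjlim_k H^1(k,\Z_\ell(1))\neq 0$), and there is no "higher-codimension Mordell--Weil" statement to appeal to. The missing idea is the weight argument, which is the only place the finite-type hypothesis enters: the composite $H^{2i-1}(X_k,\Q_\ell(i))\to H^{2i-1}(X_{\ov{k}_0},\Q_\ell/\Z_\ell(i))$ factors through $H^{2i-1}(X_{\ov{k}_0},\Q_\ell(i))^{\Gal(\ov{k}_0/k)}$, and this group vanishes because the representation is pure of nonzero weight $-1$ over a field finitely generated over its prime field (Weil II). Without that input your argument does not close.
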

	
	\begin{rem}\label{rem-blochjannsen}
	
	For $i\in\{1,2,\dim X\}$, $\lambda$ is injective:
	the case of $i=1$ is elementary using the Kummer sequence \cite[Proposition 3.6]{bloch1979torsion}, the case of $i=\dim X$ is due to Rojtman \cite{rojtman1980torsion} (see also \cite[Theorem 4.2]{bloch1979torsion}), 
	and the case of $i=2$ is a consequence of a theorem of Merkurjev--Suslin \cite[\S 18]{merkurjev1982k-cohomology}.
	In these cases,
	if $k_0$ is of finite type over its prime field,
	$\varinjlim_{k/k_{0}}\cl_{k}$ is injective on torsion
	by \cref{thm-blochjannsen}.
	For $i=1$, this is also a direct consequence of the observation of Jannsen \cite[Remark 6.15 (a)]{jannsen1988continuous} that $\cl_k\colon CH^1(X_k)\otimes_{\Z} \Z_\ell\rightarrow H^2(X_k,\Z_\ell(1))$ is injective. 
	Remarkably, the kernel of $\cl_k\colon CH^2(X_k)\{\ell\}\rightarrow H^4(X_k,\Z_\ell(2))$ might be non-zero, as we will see in \Cref{section-quadric}.
	
	For $3\leq i\leq \dim X-1$,	there are several known examples \cite{totaro1997torsion,schoen2000certain,soule2005torsion,rosenschon2010griffiths,totaro2016complex,schreieder2022infinite} where $\lambda$ is not injective;
	among them, 
	\cite{schoen2000certain,rosenschon2010griffiths,totaro2016complex,schreieder2022infinite} even showed that the kernel of $\lambda$ may be infinite.
	Note that minimal fields of definition for \cite{schoen2000certain,soule2005torsion,rosenschon2010griffiths,totaro2016complex,schreieder2022infinite} have positive transcendence degree over $\Q$,
	while Totaro's $15$-dimensional examples in \cite{totaro1997torsion} may be defined over $\Q$ or $\F_p$ with $p\neq 2$. 
	In \Cref{section-fourfold}, we exhibit the first fourfold examples defined over $\Q$
	where $\lambda$ is not injective over  $\ov{\Q}$.
	($4$ is the least possible dimension in which one can find such an example.)
	In \Cref{section-furtherex}, we give further instances of non-injectivity of $\lambda$ in relation to the integral Hodge and Tate conjectures.
	Using \cref{thm-blochjannsen} and the rigidity property of $\lambda$, all of these provide counterexamples to \cref{schreieder} over all sufficiently large finite extensions of minimal fields of definition.
	\end{rem}
	
	\begin{proof}[Proof of \cref{thm-blochjannsen}]
	The second assertion follows 
	by 
	observing that if $k_{0}$ is of finite type over its prime field, 
	then
	for every finite 
	extension $k/k_0$ contained in $\ov{k}_0/k_0$
	the map $H^{2i-1}(X_{k},\Q_\ell(i))\rightarrow H^{2i-1}(X_{\ov{k}_0},\Q_\ell/\Z_\ell(i))$
	is zero, because it factors through $H^{2i-1}(X_{\ov{k}_0},\Q_\ell(i))^{\Gal(\ov{k}_0/k)}$ which vanishes by weight reasons. 		
	
	It remains to show the first assertion.
	By construction, $\lambda$ fits into the commutative diagram:
	\[
	\begin{tikzcd}
	H^{i-1}(X_{\ov{k}_0},\mc{H}(\Q_\ell/\Z_\ell(i)))\ar[twoheadrightarrow,"f"]{r}\ar[d, "g"]&CH^i(X_{\ov{k}_0})\{\ell\}\ar[ld,"-\lambda"]\\
	H^{2i-1}(X_{\ov{k}_0},\Q_\ell/\Z_\ell(i)),&
	\end{tikzcd}
	\]
	where
	$f$ is the surjection given in \cite[Proposition 1]{colliot1983torsion},
	$H^{i-1}(X_{\ov{k}_0},\mc{H}(\Q_\ell/\Z_\ell(i)))$ is the $E^{i-1,i}_2$ term of the Bloch-Ogus spectral sequence \cite{bloch1974gersten}, and $g$ is the edge homomorphism.	
	Hence the proof will follow once we show the anti-commutativity of the following diagram:
	\begin{equation}\label{d-bloch1}
	\begin{tikzcd}
	H^{i-1}(X_{\ov{k}_0},\mc{H}^i(\Q_\ell/\Z_\ell(i)))\ar[twoheadrightarrow, "f"]{r}\ar[d, "g"]& CH^i(X_{\ov{k}_0})\{\ell\}\ar[d,"\varinjlim_{k/k_0}\cl_k"]\\
	H^{2i-1}(X_{\ov{k}_0},\Q_\ell/\Z_\ell(i)) \ar[r, "\varinjlim_{k/k_0}\beta_k"]&\varinjlim_{k/k_0} H^{2i}(X_k,\Z_\ell(i)).
	\end{tikzcd}
	\end{equation}
	Here $H^{i-1}(X_{\ov{k}_0},\mc{H}^i(\Q_\ell/\Z_\ell(i)))=\varinjlim_{k/k_0}H^{i-1}(X_{k},\mc{H}^i(\Q_{\ell}/\Z_{\ell}(i)))$, because the Gersten complex of $\mc{H}^i(\Q_\ell/\Z_\ell(i))$ on $X_{\ov{k}_0}$ is the direct limit of Gersten complexes on $X_k$. 
	Hence the anti-commutativity of (\ref{d-bloch1}) is reduced to showing, 
	for every finite extension $k/k_0$ and every integer $\nu\geq 1$, the anti-commutativity of
	\begin{equation}\label{d-bloch2}
	\begin{tikzcd}
	H^{i-1}(X_{k},\mc{H}^i(\mu_{l^\nu}^{\otimes i}))\ar[twoheadrightarrow,"f"]{r}\ar[d, "g"]& CH^i(X_{k})[\ell^\nu]\ar[d,"\cl_k"]\\
	H^{2i-1}(X_{k},\mu_{l^\nu}^{\otimes i}) \ar[r, "\beta_k"]& H^{2i}(X_k,\Z_\ell(i)).
	\end{tikzcd}
	\end{equation}
	
	To prove that (\ref{d-bloch2}) anti-commutes,
	we proceed as in the proof of \cite[Proposition 1]{colliot1983torsion}.
	Recall that each element $\alpha\in H^{i-1}(X_{k},\mc{H}^i(\mu_{l^\nu}^{\otimes i}))$ is represented by a class $a\in H^{2i-1}_Z(X_k-Z',\mu_{l^\nu}^{\otimes i})$,
	where $(Z,Z')$ is a pair of closed subsets of $X_k$ of codimension $i-1$ and $i$ respectively with $Z'\subset Z$, 
	that vanishes under the connecting homomorphism $H^{2i-1}_{Z-Z'}(X_k-Z',\mu_{l^\nu}^{\otimes i})\rightarrow H^{2i}_{Z'}(X_k,\mu_{l^\nu}^{\otimes i})$.
	We may now associate to the class $a$ two classes $b,c\in H^{2i}_Z(X_k,\Z_\ell(i))$ 
	whose images in $H^{2i}(X_{k},\Z_{\ell}(i))$ are $\beta_k\circ g(\alpha)$, $-\cl_k\circ f(\alpha)$ respectively.
	The argument is as follows, using the 
	diagram:
	\[
	\adjustbox{scale=0.9,center}{
	\begin{tikzcd}
	& & H^{2i}_{Z'}(X_k,\Z_\ell(i))\ar[r,"i"]\ar[d,"\ell^\nu"] &H^{2i}_Z(X_k,\Z_\ell(i))\\
	& H^{2i-1}_{Z-Z'}(X_k-Z',\Z_\ell(i))\ar[r,"\delta"]\ar[d,"p"] & H^{2i}_{Z'}(X_k,\Z_\ell(i))
	&\\
	H^{2i-1}_Z(X_k,\mu_{\ell^\nu}^{\otimes i})\ar[r,"j"]\ar[d,"\beta_k"]&H^{2i-1}_{Z-Z'}(X_k-Z',\mu_{\ell^\nu}^{\otimes i}) 
	& 
	&\\
	H^{2i}_Z(X_k,\Z_\ell(i)).&&&
	\end{tikzcd}
	}
	\]
	Here the horizontal arrows are from the long exact sequences for cohomology with supports, and the vertical arrows are from the long exact sequences for $H^*_{Z'}(X_k,-)$, $H^*_{Z}(X_k,-)$, and $H^*_{Z-Z'}(X_k,-)$ induced by
	the short exact sequence of inverse systems of abelian sheaves on $X_{\text{\'et}}$:
	\begin{equation}\label{ses-invet}
	\begin{tikzcd}
	& \vdots \ar[d]& \vdots\ar[d] & \vdots \ar[d]&\\
	0\ar[r] &\mu_{\ell^{\nu'+1}}^{\otimes i} \ar[r]\ar[d,"\ell"]& \mu_{\ell^{\nu'+1+\nu}}^{\otimes i}\ar[r, "\ell^{\nu'+1}"]\ar[d, "\ell"]& \mu_{\ell^{\nu}}^{\otimes i}\ar[r]\ar[d,"\on{id}"] & 0\\
	0\ar[r] &\mu_{\ell^{\nu'}}^{\otimes i} \ar[r]\ar[d]& \mu_{\ell^{\nu'+\nu}}^{\otimes i}\ar[r, "\ell^{\nu'}"]\ar[d]& \mu_{\ell^{\nu}}^{\otimes i}\ar[r]\ar[d] & 0.\\
	& \vdots &\vdots &\vdots
	\end{tikzcd}
	\end{equation}
	By the choice on $a$, there exists $a_1\in H^{2i-1}_Z(X_k,\mu_{\ell^\nu}^{\otimes i})$ such that $j(a_1)=a$. 
	Set $b\coloneqq \beta_k(a_1)$.
	Meanwhile, after possibly 
	enlarging
	$Z'\subset Z$,
	$a$ lifts along $p$
	to a class $a_2\in H^{2i-1}_{Z-Z'}(X_k-Z',\Z_\ell(i))$.
	Indeed, we may assume that: $Z-Z'$ is smooth, thus 
	\begin{align*}
	H^{2i-1}_{Z-Z'}(X_k-Z',\mu_{\ell^\nu}^{\otimes i})&=H^{1}(Z-Z', \mu_{\ell^{\nu}}),\\
	H^{2i-1}_{Z-Z'}(X_k-Z',\Z_\ell(i))&=H^{1}(Z-Z',\Z_{\ell}(1))
	\end{align*}
	by \cite[Theorem 3.17]{jannsen1988continuous};
	$a\in H^{1}(Z-Z', \mu_{\ell^{\nu}})$ lifts along the composition
	\[
	H^{0}(Z-Z',\G_{m})\xrightarrow{\Delta} H^{1}(Z-Z',\Z_{\ell}(1))\xrightarrow{p} H^{1}(Z-Z',\mu_{\ell^{\nu}}^{\otimes i}),
	\] 
	where $\Delta$ is the connecting homomorphism for the short exact sequence of inverse systems of abelian sheaves on $X_{\text{\'et}}$
	\[
	\begin{tikzcd}
	& \vdots \ar[d]& \vdots\ar[d] & \vdots \ar[d]&\\
	0\ar[r] &\mu_{\ell^{\nu'+1}} \ar[r]\ar[d,"\ell"]& \G_{m}\ar[r, "\ell^{\nu'+1}"]\ar[d, "\ell"]& \G_{m}\ar[r]\ar[d, "\on{id}"] & 0\\
	0\ar[r] &\mu_{\ell^{\nu'}} \ar[r]\ar[d]& \G_{m}\ar[r, "\ell^{\nu'}"]\ar[d]& \G_{m}\ar[r]\ar[d] & 0.\\
	& \vdots &\vdots &\vdots
	\end{tikzcd}
	\]
    (To see this, note that $p\circ\Delta$ at the direct limit over all $Z'\subset Z$ corresponds to the surjection $\oplus k(x)^{\times}\twoheadrightarrow \oplus k(x)^{\times}/\ell^{\nu}$, where the direct sums are over the generic points of $Z$.)
	Let $a_3=\delta(a_2)$.
	Then there exits $a_4\in H^{2i}_{Z'}(X_k,\Z_\ell(i))$ such that $a_3=l^\nu a_4$.
	Set $c\coloneqq i(a_4)$.
	It is now direct to see that $b,c$ satisfy the required properties.
	
	To complete the proof, it is enough to show that $b=c$.
	As the category of inverse systems of abelian sheaves on $X_{\text{\'et}}$ is an abelian category with enough injectives by \cite[Proposition 1.1]{jannsen1988continuous}, 
	we may take a Cartan-Eilenberg injective resolution of (\ref{ses-invet}).
	Now an argument analogous to \cite[p. 771]{colliot1983torsion} concludes the proof.
	\end{proof}
	
	\begin{proof}[Second Proof of \cref{complete-intersection2}]
	As in \Cref{section-ci}, let $G\coloneqq H\times \Z/2$, where $H$ is the Heisenberg group of order $32$, $Y\subset \P^N_{k_0}$ be a smooth complete intersection of dimension $15$ on which $G$ acts freely, and $X\coloneqq  Y/G$.
	By means of \cref{thm-blochjannsen} and the rigidity property of $\lambda$,
	it is enough for us to show that $\lambda\colon CH^3(X_{F})\{2\}\rightarrow H^5(X_{F},\Q_2/\Z_2(3))$ is not injective for some algebraically closed field extension $F$ of a field of definition. 
	
	The assertion in characteristic zero follows from \cite[Theorem 7.2]{totaro1997torsion}.
	In positive characteristic different from $2$, the assertion follows from \cite[Proposition 5.3 (b)]{quick2011torsion},
	because the group $H^5(X_{\ov{k}_0},\Z_2(3))$ is torsion by construction and the composition
	\[CH^2(X_{\ov{k}_0})\{2\}\xrightarrow{\lambda} H^5(X_{\ov{k}_0},\Q_2/\Z_2(3))\lhook\joinrel\xrightarrow{\beta_{\ov{k}_0}} H^6(X_{\ov{k}_0},\Z_2(3))\]
	coincides with the cycle class map.
	This concludes the proof.
	\end{proof}	
	
	We conclude this section by 
	a remark on
	Schreieder's transcendental Abel-Jacobi map \cite[\S 7.5]{schreieder2022refined}.
	
	\begin{rem}\label{rem-taj}
	Suppose that $k_{0}$ is of finite type over its prime field.
	For every finite extension $k/k_{0}$,
	we have the transcendental Abel-Jacobi map:
	\[
	\lambda_{\text{tr},k}\colon 
	CH^{i}_{0}(X_{k})\{\ell\}
	\rightarrow H^{2i-1}(X_{k},\Q_{\ell}/\Z_{\ell}(i))/N^{i-1}H^{2i-1}(X_{k},\Q_{\ell}(i)),
	\]
	where $CH^{i}_{0}(X_{k})\{\ell\}$ is the kernel of $\cl_{k}\colon CH^{i}(X_{k})\{\ell\}\rightarrow H^{2i}(X_{k},\Z_{\ell}(i))$.
	Then one can observe that $\varinjlim_{k/k_0}\lambda_{\text{tr},k}=0$ by \cite[Proposition 7.16]{schreieder2022refined} and weight arguments.
	This shows that if
	\[\varinjlim_{k/k_{0}}CH^{i}_{0}(X_{k})\{\ell\}=\Ker\left(CH^{i}(X_{\ov{k}_{0}})\{\ell\}\xrightarrow{\varinjlim_{k/k_{0}}\cl_{k}}\varinjlim_{k/k_{0}}H^{2i}(X_{k},\Z_{\ell}(i))\right)\]
	is not zero, 
	or equivalently by \cref{thm-blochjannsen}, 
	if $\lambda$ is not injective over $\ov{k}_0$,
	then $\lambda_{\text{tr},k}$ is not injective for all sufficiently large finite extensions $k/k_{0}$ contained in $\ov{k}_0/k_0$.
	As described in \cref{rem-blochjannsen}, we already have several examples with the property, and we will give further such examples in Sections \ref{section-fourfold} and \ref{section-furtherex}.
	In \Cref{section-quadric}, we will provide examples with $\lambda_{\text{tr},k}\neq 0$.
	\end{rem}
	
\section{Proof of Theorem \ref{thm-fourfold}}\label{section-fourfold}
    Let $k_0$ be a number field.
    Let $B$ (resp. $E$) be an abelian threefold (resp. an elliptic curve) over $k_0$
    and set $A\coloneqq B\times E$.
    Suppose that $A$ has good ordinary reduction at some prime dividing $2$.
	For instance, one can take $k_0=\Q$ and $A$ to be the product of $4$ copies of the elliptic curve
	\[
	y^2+xy=x^3+1.
	\]
	
	Let $\iota$ be an involution acting on $B$ by $-1$ and $Y$ be the Kummer threefold associated to $B$, i.e., the blow up of $B/\iota$ at the $64$ singular points, so that $Y$ is smooth and contains $64$ disjoint copies of $\P^2$.
	Finally, set $X\coloneqq Y\times E$.
	Note that the action of $\iota$ lifts to $A$ where $\iota$ acts trivially on $E$,
	and $X$ can also be obtained by blowing up the quotient variety $A/\iota$ along the singular locus.
	
	In the following, we fix an embedding $k_0\hookrightarrow \C$.
	
	\begin{lem}\label{lem-uc4}
	$H^4_{\on{nr}}(X_\C,\Z/2)\neq 0$.
	\end{lem}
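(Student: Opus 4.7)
My approach is to produce a non-zero element of $H^4_{\on{nr}}(X_\C,\Z/2)$ by constructing an explicit Kummer symbol on the function field of $X$, in the spirit of Bloch--Esnault.

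First, since unramified cohomology is a birational invariant of smooth projective varieties and $X\to A/\iota$ is a resolution of singularities, one has
\[
H^4_{\on{nr}}(X_\C,\Z/2) = H^4_{\on{nr}}((A/\iota)_\C,\Z/2).
\]
A key simplification is that $-1$ is trivial in $\F_2$, so $\iota$ acts trivially on $H^\ast(\C(A),\Z/2)$. Via Hochschild--Serre for the quadratic extension $\C(A)/\C(A)^\iota$, every $\iota$-invariant class on $\C(A)$ contributes to a class on $\C(A)^\iota = \C(A/\iota)$. The task thus reduces to producing an $\iota$-invariant class on $\C(A)$ whose descent to $\C(A)^\iota$ is unramified.

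Second, following Bloch--Esnault, I would take a cup product $u = \alpha \cup \beta$, where $\alpha \in H^3(\C(B),\Z/2)$ is a Milnor symbol $\{f_1,f_2,f_3\}$ built from three independent $2$-torsion characters of $B$ (equivalently, Kummer functions dual to a basis of $B[2]^\vee$), and $\beta \in H^1(\C(E),\Z/2)$ is the Kummer class of a rational function $g$ on $E$ associated with a non-trivial $2$-torsion point. By the Bloch--Kato norm residue isomorphism, $u$ defines an element of $H^4(\C(A),\Z/2)$ and, by the previous step, descends to $H^4(\C(X),\Z/2)$.

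The main obstacle, and technical heart of the proof, is to verify simultaneously that $u$ is unramified at every geometric divisorial valuation on $\C(X)$ and that it is nonzero. Residues along divisors coming from $A/\iota$ can be controlled by the tame symbol formula, but residues along the exceptional divisors of $X\to A/\iota$ sitting over the $64$ fixed points of $\iota$ in $B$ (each a copy of $\P^2\times E$) require a direct analysis on each exceptional divisor. The good-ordinary-reduction hypothesis on $A$ at a prime above $2$ enters precisely for the non-vanishing: specializing $u$ modulo such a prime, the ordinary condition provides a full-rank \'etale $2$-torsion subgroup on the special fibre, which is enough to detect $u$ as a non-zero Milnor symbol in characteristic $2$. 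Orchestrating the choices of $f_1,f_2,f_3$ and $g$ so that both the unramifiedness along exceptional divisors and the non-vanishing after specialization hold simultaneously is the crux.
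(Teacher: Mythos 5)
There is a genuine gap, in fact two. First, your descent step is not justified: for the degree-$2$ extension $\C(A)/\C(A)^{\iota}$ with $\Z/2$-coefficients, the groups $H^p(\Z/2,H^q(\C(A),\Z/2))$ do not vanish (already $H^p(\Z/2,\Z/2)=\Z/2$ for all $p$), so the fact that your class $u$ is $\iota$-invariant gives no control over whether it lies in the image of restriction from $\C(A)^{\iota}$; ``contributes to a class'' is exactly the point that needs proof. The paper replaces this by a genuinely non-formal geometric input: with $A^{\circ}=A-(B[2]\times E)$ and $U=A^{\circ}/\iota$, the pullback $\pi^*\colon H^4(U,\Z/2)\to H^4(A^{\circ},\Z/2)$ is \emph{surjective}, which is proved by reducing to surjectivity on $H^1$ and using that the extension $1\to\pi_1(B-B[2])\to\pi_1((B-B[2])/\iota)\to\{\pm1\}\to1$ splits (via the $\mathbb{RP}^5$ link of a fixed point, as in Spanier and Diaz). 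Nothing in your outline supplies a substitute for this splitting, and without it the passage from classes on $A$ (or on $\C(A)$) to classes on the quotient does not go through.

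Second, the two steps you yourself isolate as ``the crux'' --- unramifiedness along the $64$ exceptional divisors $\P^2\times E$ and non-vanishing after specialization at an ordinary prime above $2$ --- are precisely the mathematical content of the lemma, and they are not carried out. The non-vanishing part of your plan amounts to reproving the theorem of Bloch--Esnault (the statement that $H^4(A_\C,\Z/2)\to H^4_{\on{nr}}(A_\C,\Z/2)$ is non-zero for an abelian fourfold with good ordinary reduction at a prime over $2$); this is a substantial theorem, not a routine tame-symbol computation, and ``a full-rank \'etale $2$-torsion subgroup detects $u$ as a non-zero symbol in characteristic $2$'' is not an argument. Likewise, the paper does not check residues of an explicit symbol along the exceptional divisors: it shows that any class on $U$ coming from a global cohomology class in $H^4(U,\Z/2)$ automatically extends to an unramified class on $X$, because $H^3_{\on{nr}}(\P^2\times E,\Z/2)=H^3_{\on{nr}}(E,\Z/2)=0$ (Diaz's lemma); your explicit symbol is not known to come from a global class on $U$, so even this shortcut is unavailable to you as stated. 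In short, the strategy (explicit Milnor symbols plus hands-on residue and specialization analysis) is different from the paper's (Bloch--Esnault as a black box plus Diaz's descent diagram), but as written it defers all of the difficulties rather than resolving them, and the Hochschild--Serre descent claim it does make is incorrect as stated.
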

	\begin{proof}
	We follow the method of Diaz in \cite[\S 2.1]{diaz2020unramified}.
	In this proof,
	we write $A, B, E, X$ for $A_\C, B_\C, E_\C, X_\C$.
	Letting $A^{\circ}\coloneqq A-(B[2]\times E)$, $U\coloneqq A^{\circ}/\iota$, and $\pi\colon A^\circ \rightarrow U$ be the quotient map,
	we have the following commutative diagram:
	\begin{equation}\label{d-diaz}
	\begin{tikzcd}
	H^4(A,\Z/2)\ar[r,"\sim"]\ar[d]& H^4(A^{\circ},\Z/2)\ar[d]&\ar[l, twoheadrightarrow, "\pi^*"'] H^4(U,\Z/2)\ar[rd]\ar[d]& \\
	H^4_{\on{nr}}(A,\Z/2)\ar[r,hook]& H^4_{\on{nr}}(A^{\circ},\Z/2)&\ar[l, "\pi^*"']H^4_{\nr}(U,\Z/2)&\ar[l, hook']H^4_{\on{nr}}(X,\Z/2).
	\end{tikzcd}
	\end{equation}
	Here the vertical arrows are the restriction maps and the horizontal arrows are the pull-back maps; the injectivity of $H^4_{\on{nr}}(A,\Z/2)\to H^4_{\on{nr}}(A^\circ,\Z/2)$ and $H^4_{\on{nr}}(X,\Z/2)\to H^4_{\on{nr}}(U,\Z/2)$ is by definition of unramified cohomology; the map
	$H^4(A,\Z/2)\to H^4(A^{\circ},\Z/2)$ is an isomorphism because $\on{codim}(B[2]\times E,A)=3$.
	
	We need to check that (i) $\pi^*\colon H^4(U,\Z/2)\twoheadrightarrow H^4(A^\circ,\Z/2)$ and (ii) $H^4(U,\Z/2)\rightarrow H^4_{\on{nr}}(U,\Z/2)$ factors through $H^4_{\on{nr}}(X,\Z/2)$.
	As for (i), note that
	$A^{\circ}=(B-B[2])\times E$ and
	$U=(B-B[2])/\iota\times E$.
	Letting $\rho\colon B-B[2]\rightarrow (B-B[2])/\iota$ be the quotient map,
	it is enough for us to show that $\rho^*\colon H^i((B-B[2])/\iota,\Z/2)\rightarrow H^i(B-B[2],\Z/2)$ is surjective for $i=2, 3,4$.
	Since $\on{codim}(B[2],B)=3$, 
	the restriction map
	\[
	\Lambda^iH^1(B,\Z/2)\xrightarrow{\sim}H^i(B,\Z/2)\rightarrow H^i(B-B[2],\Z/2)
	\]
	is an isomorphism for $i\leq 4$.
	So it suffices to show that $\rho^*\colon H^1((B-B[2])/\iota,\Z/2)\rightarrow H^1(B-B[2],\Z/2)$ is surjective,
	which follows from the fact that the short exact sequence
	\[
	1\rightarrow \pi_1(B-B[2])\rightarrow \pi_1((B-B[2])/\iota)\rightarrow \{\pm 1\}\rightarrow 1
	\]
	splits.
	Here the splitting is given by the non-trivial element in the fundamental group of $\mathbb{RP}^5$ that appears as the quotient of the boundary $\mathbb{S}^5$ of an open ball neighborhood of a $2$-torsion point in $B$, as observed in the first paragraph of the proof of \cite[Theorem 1]{spanier1956homology} (see also \cite[p. 267]{diaz2020unramified}).
	Alternatively, (i) directly follows from \cite[Corollary 2.8]{diaz2020unramified}, because the assumptions for the statement are satisfied:
	$B[2]\times E$ is smooth, $\on{codim}(B[2]\times E,A)=3$, $\iota$ acts by $-1$ on the normal bundle $N_{B[2]\times E/A}$, and $\iota$ acts trivially on $H^1(A,\Z/2)$.
	As for (ii), the direct computation of the unramified cohomology group using the Gersten complex reduces it to the vanishing $H^3_{\nr}(X-U,\Z/2)=0$ (see \cite[Lemma 2.10]{diaz2020unramified}).
	The vanishing indeed holds because $X-U$ is $64$ disjoint copies of $\P^2\times E$ and \[H^3_{\on{nr}}(\P^2\times E,\Z/2)=H^3_{\on{nr}}(E,\Z/2)=0.\]

	Finally, a theorem of Bloch--Esnault \cite[Theorem 1.2]{bloch1996coniveau} shows that $H^4(A,\Z/2)\rightarrow H^4_{\on{nr}}(A,\Z/2)$ is non-zero. (Here we use the rigidity property for unramified cohomology with torsion coefficients \cite[Theorem 4.4.1]{colliot1995birational}.)
	This, with (\ref{d-diaz}), concludes the proof.
	\end{proof}
	
	\begin{prop}\label{prop-deligne}
	$\cl_{\mc{D}}\colon CH^3(X_\C)\{2\}\rightarrow H^6_{\mc{D}}(X_\C,\Z(3))$ is not injective.
	\end{prop}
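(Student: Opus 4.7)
The plan is to transfer the non-vanishing of $H^4_{\on{nr}}(X_\C,\Z/2)$ established in \cref{lem-uc4} into non-injectivity of $\cl_{\mc{D}}$ on $2$-primary torsion in codimension $3$. The transfer proceeds in two steps and uses two main inputs: the Bloch--Kato conjecture (Voevodsky's theorem) and a theorem of Voisin and Ma that relates $H^4_{\on{nr}}(X_\C,\Q_2/\Z_2(3))$ to $\ker(\cl_{\mc{D}})$ on $2$-primary torsion in codimension $3$.

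The first step is to upgrade the mod-$2$ non-vanishing to a $\Q_2/\Z_2(3)$-coefficient non-vanishing, that is, $H^4_{\on{nr}}(X_\C,\Q_2/\Z_2(3))\neq 0$. The short exact sequence $0\to\Z/2\to\Q_2/\Z_2(3)\xrightarrow{2}\Q_2/\Z_2(3)\to 0$ gives a long exact sequence in which the map $H^4_{\on{nr}}(X_\C,\Z/2)\to H^4_{\on{nr}}(X_\C,\Q_2/\Z_2(3))[2]$ has kernel equal to the image of the Bockstein $H^3_{\on{nr}}(X_\C,\Q_2/\Z_2(3))\to H^4_{\on{nr}}(X_\C,\Z/2)$. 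What must be checked is that the Bloch--Esnault class produced in \cref{lem-uc4} is \emph{not} in the image of this boundary; the Bloch--Kato conjecture, through its Milnor $K$-theoretic description of unramified cohomology in low degree, together with the K\"unneth structure of $X=Y\times E$ and the known form of $H^{\le 3}_{\on{nr}}$ of the Kummer threefold $Y$ and of the elliptic curve $E$, should suffice to rule this out.

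The second step is to apply the Voisin--Ma theorem. Under a suitable hypothesis on the Chow group of zero-cycles of $X_\C$, this theorem produces a surjection from a quotient of $H^4_{\on{nr}}(X_\C,\Q_2/\Z_2(3))$ onto $\ker\bigl(\cl_{\mc{D}}\colon CH^3(X_\C)\{2\}\to H^6_{\mc{D}}(X_\C,\Z(3))\bigr)$. Combined with the first step, the non-vanishing of $H^4_{\on{nr}}(X_\C,\Q_2/\Z_2(3))$ forces this kernel to be non-zero, yielding the desired non-injectivity.

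The main obstacle is the hypothesis on $CH_0(X_\C)$ required for the Voisin--Ma theorem. Since $X$ is a Kummer threefold times an elliptic curve, $CH_0(X_\C)$ is large and the simplest form of the statement does not apply directly. The ``extra care'' referred to in the introduction is precisely the verification that the abelian threefold $B$ and the elliptic curve $E$ (constrained by the good-ordinary-reduction hypothesis at a prime dividing $2$) can be chosen so that $CH_0(X_\C)$ admits a controlled decomposition sufficient for the theorem to apply; the product structure of $X$ and of $A=B\times E$, combined with the Kummer construction $Y\to B/\iota$, is used to reduce such a decomposition on $X$ to statements about $B$ and $E$ separately. This geometric/arithmetic input, rather than either of the two cohomological reductions, is the technical heart of the proof.
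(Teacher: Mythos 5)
Your overall route is the paper's: upgrade \cref{lem-uc4} to $H^4_{\on{nr}}(X_\C,\Q/\Z)\{2\}\neq 0$ using the Bloch--Kato conjecture, then feed this into the Voisin--Ma results to contradict injectivity of $\cl_{\mc D}$ on torsion. But as written there is a genuine gap in the second step. What Voisin and Ma give (under the hypothesis that $CH_0(X_\C)$ is supported in small dimension) is an exact sequence
$0\to \Lambda^5(X_\C)_{\tors}\to H^4_{\on{nr}}(X_\C,\Q/\Z)/H^4_{\on{nr}}(X_\C,\Z)\otimes\Q/\Z\to \mc T^3(X_\C)\to 0$,
where $\Lambda^5(X_\C)=H^5(X_\C,\Z)/N^2H^5(X_\C,\Z)$ and $\mc T^3(X_\C)$ is the kernel of $\cl_{\mc D}$ on torsion \emph{modulo algebraically trivial cycles} --- not a surjection onto $\Ker(\cl_{\mc D})$ itself. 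So non-vanishing of the middle term does not immediately produce a nonzero element of the kernel: your class could die in $\mc T^3$, i.e.\ come from $\Lambda^5(X_\C)_{\tors}$. The paper closes exactly this loophole: by Suzuki's theorem, $\Coker\left(H^5(X_\C,\Z)_{\tors}\to\Lambda^5(X_\C)_{\tors}\right)$ is isomorphic to the kernel of $\cl_{\mc D}$ on algebraically trivial torsion cycles, and $H^5(X_\C,\Z)$ is torsion-free (Spanier's computation for the Kummer threefold, plus the elliptic-curve factor), so $\Lambda^5(X_\C)_{\tors}\neq 0$ would \emph{also} give non-injectivity; hence either branch of the exact sequence yields the conclusion. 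Your proposal has no counterpart for this, and without it the argument does not go through. A second, smaller omission: you must also check that the nonzero class survives the quotient by $H^4_{\on{nr}}(X_\C,\Z)\otimes\Q/\Z$; the paper does this by proving $H^4_{\on{nr}}(X_\C,\Z)=0$ outright, via decomposition of the diagonal (possible because $CH_0(X_\C)$ is supported in dimension $\leq 3$) together with Bloch--Kato.

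You also misplace the ``technical heart.'' The $CH_0$ hypothesis is not an obstacle requiring a special choice of $B$ and $E$: $CH_0(Y_\C)$ is supported in dimension $\leq 2$ by Bloch--Srinivas, hence $CH_0(X_\C)$ is supported in dimension $\leq 3$, which is precisely what Ma's refinement of Voisin's theorem (and the diagonal decomposition above) needs. The good-ordinary-reduction hypothesis at $2$ plays no role here; it is used only in the Bloch--Esnault input to \cref{lem-uc4}, which you are taking as given. Finally, in your first step the K\"unneth analysis is unnecessary and left unproven (``should suffice''): the clean statement is that $H^4_{\on{nr}}(X_\C,\Z/2)\to H^4_{\on{nr}}(X_\C,\Q/\Z)$ is injective, which follows from the Bloch--Kato conjecture (divisibility of degree-$3$ unramified cohomology with divisible coefficients, as in Auel--Colliot-Th\'el\`ene--Parimala), with no reference to the product structure of $X$.
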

	\begin{proof}
	One needs to relate the fourth unramified cohomology group to the kernel of the Deligne cycle class map on torsion in codimension $3$.
	We start with a short exact sequence given by \cite[Theorem 0.2]{voisin2012degree4} and \cite[Remark 4.2 (1)]{ma2017torsion}:
	\begin{align}\label{ses-unramified}
	0\rightarrow \Lambda^5(X_\C)_{\tors} \rightarrow H^4_{\nr}(X_\C,\Q/\Z)/H^4_{\nr}(X_\C,\Z)\otimes \Q/\Z\rightarrow \mc{T}^3(X_\C)\rightarrow 0,
	\end{align}
	where
	\begin{align*}
	\Lambda^5(X_\C)&\coloneqq H^5(X_\C,\Z)/N^2H^5(X_\C,\Z),\\ 
	\mc{T}^3(X_\C)&\coloneqq \Ker\left(\cl_{\mc{D}}\colon CH^3(X_\C)_{\tors}\rightarrow H^6_{\mc{D}}(X_\C,\Z(3))\right)/\alg.
	\end{align*}
	(The notation $/\alg$ in the above means quotient by the algebraically trivial cycles in the kernel.)
	It is important for us that $CH_0(X_\C)$ is supported in dimension $\leq 3$, because $CH_0(Y_\C)$ is supported in dimension $\leq 2$ by \cite[\S 4 (1)]{bloch1983remark}.
	By decomposition of the diagonal and the Bloch--Kato conjecture proved by Voevodsky, we have 
	\begin{align}\label{eq-unramified}
	H^4_{\nr}(X_\C,\Z)=0
	\end{align}
	(see \cite[Proposition 3.3 (i)]{colliot2012cohomologie}).
	Moreover, \cite[Theorem 1.1]{suzuki2020remark} yields
	\begin{align*}
	&\Coker\left(H^5(X_\C,\Z)_{\tors}\rightarrow \Lambda^5(X_\C)_{\tors}\right)\\
	&\simeq
	\Ker\left(\cl_{\mc{D}}\colon CH^3(X_\C)_{\alg,\tors}\rightarrow H^6_{\mc{D}}(X_\C,\Z(3)) \right),
	\end{align*}
	where we write $CH^3(X_\C)_{\alg,\tors}\subset CH^3(X_\C)$ for the subgroup of algebraically trivial torsion cycles.
	Note that $H^5(X_\C,\Z)$ is in fact torsion-free, because $Y_\C$ and $E_\C$ have torsion-free cohomology (use \cite[Theorem 2]{spanier1956homology} for the Kummer threefold $Y_\C$),
	hence
	\begin{align}\label{eq-lambda}
	\Lambda^5(X_\C)_{\tors}\simeq\Ker\left(\cl_{\mc{D}}\colon CH^3(X_\C)_{\alg,\tors}\rightarrow H^6_{\mc{D}}(X_\C,\Z(3)) \right).
	\end{align}
	By (\ref{ses-unramified}), (\ref{eq-unramified}), and (\ref{eq-lambda}), it remains to show that $H^4_{\nr}(X_\C,\Q/\Z)\{2\}\neq 0$.
	This can be deduced from \cref{lem-uc4}, because the natural map
	\[
	H^4_{\on{nr}}(X_\C,\Z/2)\rightarrow H^4_{\on{nr}}(X_\C,\Q/\Z) 
	\]
	is injective again by the Bloch--Kato conjecture (see \cite[Theorem 1.1]{auel2017universal}).
	The proof is now complete.
	\end{proof}
	
	We prove a strengthened version of \cref{thm-fourfold}.
	
	\begin{thm}\label{thm-fourfold2}
	Let $k_0$ be a field 
	of characteristic zero.
	Then there exist a fourfold product $X=Y\times E$ over $k_0$, where $Y$ is a Kummer threefold and $E$ is an elliptic curve, and a finite extension $k/k_0$ such that the cycle class map
	\[
	\cl\colon CH^3(X_k)[2]\rightarrow H^6(X_k,\Z_2(3))
	\]
	is not injective.
	\end{thm}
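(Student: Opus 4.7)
The strategy is to first establish the result over a specific number field and then deduce it for an arbitrary field of characteristic zero by base change combined with the rigidity of Bloch's map.

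For the base case, I take $k_{1}=\Q$ together with the specific construction given at the beginning of this section: $B=E_{0}^{3}$ and $E=E_{0}$, where $E_{0}$ is the elliptic curve $y^{2}+xy=x^{3}+1$, which has good ordinary reduction at $2$. By \Cref{prop-deligne}, the Deligne cycle class map $\cl_{\mc D}\colon CH^{3}(X_{\C})\{2\}\to H^{6}_{\mc D}(X_{\C},\Z(3))$ is not injective. Since Bloch's map $\lambda$ agrees with $\cl_{\mc D}$ on torsion (\cite[Proposition 3.7]{bloch1979torsion}), the map $\lambda\colon CH^{3}(X_{\C})\{2\}\to H^{5}(X_{\C},\Q_{2}/\Z_{2}(3))$ is also not injective, and by the rigidity of $\lambda$ the same holds for $CH^{3}(X_{\ov{k_{1}}})\{2\}$. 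Now \Cref{thm-blochjannsen} applied to the number field $k_{1}$ implies that $\varinjlim_{k/k_{1}}\cl_{k}$ is not injective on $2$-primary torsion. Unraveling the direct limit, I obtain a finite extension $k/k_{1}$ and a non-zero class $\tilde{\alpha}\in CH^{3}(X_{k})\{2\}$ with $\cl(\tilde{\alpha})=0$ in $H^{6}(X_{k},\Z_{2}(3))$. Replacing $\tilde{\alpha}$ by $2^{n-1}\tilde{\alpha}$, where $2^{n}$ is its order, I may assume $\tilde{\alpha}\in CH^{3}(X_{k})[2]$ is non-zero, which proves the theorem when $k_{0}=k_{1}$.

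For an arbitrary field $k_{0}$ of characteristic zero, we have $\Q\subset k_{0}$, so $B$, $E$, $Y$, and hence $X$ base change from $\Q$ to produce the required Kummer-times-elliptic fourfold over $k_{0}$. Fix an embedding $\ov{k_{1}}\hookrightarrow \ov{k_{0}}$. The compositum $k'\coloneqq k\cdot k_{0}\subset \ov{k_{0}}$ is a finite extension of $k_{0}$. Pulling back $\tilde{\alpha}$ along $X_{k'}\to X_{k}$ yields a class $\tilde{\alpha}'\in CH^{3}(X_{k'})[2]$ with $\cl(\tilde{\alpha}')=0$ by functoriality of the cycle class map. To verify $\tilde{\alpha}'\neq 0$, I invoke the rigidity of the torsion of Chow groups \cite{lecomte1986rigidite}: the image of $\tilde{\alpha}$ in $CH^{3}(X_{\ov{k_{1}}})[2]$ is non-zero because it represents the limit class produced above, and rigidity implies that its further image in $CH^{3}(X_{\ov{k_{0}}})[2]$ remains non-zero. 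Since $\tilde{\alpha}'$ maps to this non-zero class, it is itself non-zero.

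The substantive inputs are \Cref{prop-deligne} (non-injectivity of $\cl_{\mc D}$ on torsion in codimension $3$ for the specific fourfold) and \Cref{thm-blochjannsen} (identification of that non-injectivity with non-injectivity of $\cl$ in continuous $\ell$-adic cohomology over an extension of $k_{1}$). The remaining obstacles are administrative: extracting from the direct limit in \Cref{thm-blochjannsen} a single finite-level cycle, and applying rigidity twice, once for $\lambda$ to transport non-injectivity from $\C$ to $\ov{k_{1}}$, and once for torsion Chow groups to propagate non-vanishing to $\ov{k_{0}}$.
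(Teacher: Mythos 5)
Your overall strategy matches the paper's: the substantive inputs are indeed \Cref{prop-deligne}, the identification of $\lambda$ with $\cl_{\mc{D}}$ on torsion, rigidity of $\lambda$, and \Cref{thm-blochjannsen}. However, the detour through $\Q$ followed by base change introduces a genuine gap in the last step, namely the verification that $\tilde{\alpha}'\neq 0$. After you replace $\tilde{\alpha}$ by $2^{n-1}\tilde{\alpha}$ (where $2^{n}$ is its order in $CH^{3}(X_{k})$), this new class no longer ``represents the limit class produced above'': its image in $CH^{3}(X_{\ov{k}_1})$ is $2^{n-1}\alpha_\infty$, where $\alpha_\infty$ is the limit class, and this may well vanish, since the order of a torsion cycle can drop under the field extension $k\subset \ov{k}_1$ (the kernel of $CH^{3}(X_{k})\to CH^{3}(X_{\ov{k}_1})$ can contain torsion). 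So the rigidity argument, which only controls $CH^{3}(X_{\ov{k}_1})_{\tors}\to CH^{3}(X_{\ov{k}_0})_{\tors}$, gives no information about your chosen finite-level class, and as written the non-vanishing of $\tilde{\alpha}'$ is not established.

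The gap is repairable within your framework: first replace the limit class $\alpha_\infty\in\Ker(\lambda)$ by $2^{m-1}\alpha_\infty$, where $2^{m}$ is its order in $CH^{3}(X_{\ov{k}_1})$, so that the nonzero class you descend is itself $2$-torsion in the limit and still lies in $\Ker(\lambda)$; then choose a finite level over which it is defined, enlarging the field so that it becomes $2$-torsion and is killed by $\cl$ at that level, and only then base change and apply rigidity. But the cleaner route — and the one the paper takes — is to skip the detour entirely: rigidity of $\lambda$ transports non-injectivity from $\C$ to $\ov{k}_0$ directly, and then one applies the first assertion of \Cref{thm-blochjannsen} (the compatibility $\varinjlim_{k/k_0}\cl_k=(\varinjlim_{k/k_0}\beta_k)\circ\lambda$ on torsion, which requires no finite-type hypothesis on $k_0$) with base field $k_0$ itself; a nonzero element of $\Ker(\lambda)$ then yields, at some finite level $k/k_0$, a nonzero class in $CH^{3}(X_k)\{2\}$ killed by $\cl$, and multiplying by a suitable power of $2$ gives the required element of $CH^{3}(X_k)[2]$. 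This avoids any appeal to rigidity of torsion in Chow groups for the descent step and removes the order-dropping issue altogether.
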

	\begin{proof}
	Let 
	$X=Y\times E$ be a fourfold product 
	over a subfield $\widetilde{k}_{0}\subset k_{0}$ that is finite over $\Q$,
	as given at the beginning of this section.
	Fixing an embedding $\widetilde{k}_{0}\hookrightarrow \C$,
	\cref{prop-deligne} shows that \[\lambda\colon CH^3(X_{\C})\{2\}\rightarrow H^5(X_{\C},\Q_2/\Z_2(3))\] 
	is not injective,
	hence by the rigidity property of $\lambda$, the same result holds over $\ov{\widetilde{k}}_0$, then over $\ov{k}_0$.
	\cref{thm-blochjannsen}
	now shows
	that there exists a finite extension $k/k_0$ such that 
	\[\cl\colon CH^3(X_{k})\{2\}\rightarrow H^6(X_{k},\Z_2(3))\]
	is not injective.
	This finishes the proof.
	\end{proof}
 
	\section{Further examples in codimension three}\label{section-furtherex}
	In this section, we provide further counterexamples to \cref{schreieder} in codimension $3$.
	By \cref{thm-blochjannsen}, this is reduced to finding examples for which Bloch's map $\lambda$ is not injective over some algebraically closed field extension of a field of definition.
	To achieve this, we use non-torsion type counterexamples to the integral Hodge and Tate conjectures, inspired by the work of Soul\'e--Voisin \cite{soule2005torsion}.
	
	Let $k_0$ be a field,
	$\ell$ be a prime number invertible in $k_0$, $i\geq 0$ be an integer, and $Y$ be a smooth projective variety over $k_0$.
	We define 
	\[
	\widetilde{Z}^{2i}_{\text{\'et},\ell}(Y_{(k_{0})_s})\coloneqq
	\Coker\left(H^{2i}(Y_{(k_{0})_s},\Z_\ell(i))_{\tors}\rightarrow H^{2i}(Y_{(k_{0})_s},\Z_\ell(i))^{(1)}/H^{2i}_{\alg}(Y_{(k_{0})_s},\Z_\ell(i))\right),
	\]
	where $H^{2i}(Y_{(k_{0})_s},\Z_\ell(i))^{(1)}\subset H^{2i}(Y_{(k_{0})_s},\Z_\ell(i))$ is the $\Gal((k_{0})_s/k_0)$-submodule consisting of elements with open stabilizer, 
	and $H^{2i}_{\alg}(Y_{(k_{0})_s},\Z_\ell(i))$ is the image of the cycle class map $\cl\colon CH^i(Y_{(k_{0})_s})\otimes_{\Z} \Z_\ell\rightarrow H^{2i}(Y_{(k_{0})_s},\Z_\ell(i))$.
	The group $\widetilde{Z}^{2i}_{\text{\'et},\ell}(Y_{(k_{0})_s})$ is well-defined
	because $H^{2i}(Y_{(k_{0})_s},\Z_\ell(i))^{(1)}\subset H^{2i}(Y_{(k_{0})_s},\Z_\ell(i))$ is saturated by \cite[Lemma 4.1]{colliot2013cycles}.
	Note that
	$\widetilde{Z}^{2i}_{\text{\'et},\ell}(Y_{(k_{0})_s})_{\tors}=0$ if and only if the sublattice
	\[
	H^{2i}_{\alg}(Y_{(k_{0})_s},\Z_\ell(i))_{\tf}\subset H^{2i}(Y_{(k_{0})_s},\Z_\ell(i))^{(1)}_{\tf}
	\]
	is saturated.
	When $k\subset \C$, we similarly define
	\[
	\widetilde{Z}^{2i}(Y_\C)\coloneqq  \Coker\left(H^{2i}(Y_\C,\Z)_{\tors}\rightarrow  \on{Hdg}^{2i}(Y_\C,\Z)/H^{2i}_{\alg}(Y_\C,\Z)\right),
	\]
	where $\on{Hdg}^{2i}(Y_\C,\Z)\subset H^{2i}(Y_\C,\Z)$ is the subgroup of integral Hodge classes and $H^{2i}_{\alg}(Y_\C,\Z)\coloneqq \Image\left(\cl\colon CH^i(Y_\C)\rightarrow H^{2i}(Y_\C,\Z)\right)$.
	Note that $\widetilde{Z}^{2i}(Y_\C)_{\tors}=0$ if and only if the sublattice 
	\[H^{2i}_{\alg}(Y_\C,\Z)_{\tf}\subset \on{Hdg}^{2i}(Y_\C,\Z)_{\tf}\] 
	is saturated.
	
	\begin{lem}\label{lem-tatehodgetobloch}
	With the same notation as above, suppose either:
	$\widetilde{Z}^{2i}_{\text{\'et},\ell}(Y_{(k_{0})_s})\{\ell\}\neq 0$, or $k_0\subset \C$ and $\widetilde{Z}^{2i}(Y_\C)\{\ell\}\neq 0$.
	Then there exist a finitely generated extension $K_0/k_0$ with $\on{tr.deg}_{k_0} K_0= 1$ and an elliptic curve $E$ over $K_0$ such that, letting $X\coloneqq Y\times_{k_0} E$, the map $\lambda\colon CH^{i+1}(X_{\ov{K}_0})\{\ell\}\rightarrow H^{2i+1}(X_{\ov{K}_0},\Q_\ell/\Z_\ell(i+1))$ is not injective.
	\end{lem}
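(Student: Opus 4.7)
The plan is to build an explicit $\ell^N$-torsion cycle $\eta$ on $X_{\ov{K}_0}$ lying in the kernel of $\lambda$, formed as the product of an algebraic cycle on $Y$ supplied by the hypothesis and an $\ell^N$-torsion divisor on $E$. Take $K_0:=k_0(t)$ and let $E$ be any elliptic curve over $K_0$. Unpacking the hypothesis (via Artin's comparison in the Hodge case, and a routine adjustment to reduce to integral $\Z$-coefficients for the cycle), fix a class $\alpha\in H^{2i}(Y_{(k_0)_s},\Z_\ell(i))^{(1)}$ and an integral cycle $z\in CH^i(Y_{(k_0)_s})$ with $\ell^N\alpha=\cl(z)$ and $\alpha\notin H^{2i}_{\alg}(Y_{(k_0)_s},\Z_\ell(i))+H^{2i}(Y_{(k_0)_s},\Z_\ell(i))_{\tors}$. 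Choose $T\in E(\ov{K}_0)$ of exact order $\ell^N$ (available since $E[\ell^N](\ov{K}_0)\simeq(\Z/\ell^N)^2$) and set
\[\eta:=z\times([T]-[O])\in CH^{i+1}(X_{\ov{K}_0}).\]
Because $\ell^N([T]-[O])=0$ in $\Pic(E_{\ov{K}_0})$, the cycle $\eta$ is $\ell^N$-torsion.

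To see $\lambda(\eta)=0$, I would invoke the Leibniz rule $\lambda(z\times\tau)=\cl(z)\cup\lambda(\tau)$ for Bloch's map on an external product with one torsion factor; this follows by tracking the Bockstein compatibility $\beta\circ\lambda=\cl$ from \cref{thm-blochjannsen} through the K\"unneth decomposition. On $E$, the codimension-one Bloch map coincides with the canonical Kummer isomorphism $\Pic^0(E_{\ov{K}_0})\{\ell\}\simeq H^1(E_{\ov{K}_0},\Q_\ell/\Z_\ell(1))$, so $\lambda([T]-[O])$ is itself annihilated by $\ell^N$. Hence
\[\lambda(\eta)=(\ell^N\alpha)\cup\lambda([T]-[O])=\alpha\cup\bigl(\ell^N\lambda([T]-[O])\bigr)=0.\]

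The main obstacle is to show $\eta\neq 0$. For this I would use the Chow--K\"unneth decomposition for the abelian variety $E$ (Deninger--Murre, Beauville): the correspondence $\pi_1:=\Delta_E-E\times\{O\}-\{O\}\times E$ realizes the $h^1(E)$-summand, and the induced idempotent on $CH^{i+1}(X_{\ov{K}_0})$ has image naturally identified with $CH^i(Y_{\ov{K}_0})\otimes_{\Z}\Pic^0(E_{\ov{K}_0})$ via $z'\otimes D\mapsto z'\times D$. Since $[T]-[O]\in\Pic^0(E_{\ov{K}_0})$, the cycle $\eta$ sits in this summand and corresponds to $z\otimes(T-O)$. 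As $T-O$ has order exactly $\ell^N$, the non-vanishing of this tensor is equivalent to the statement $z\notin\ell^N\cdot CH^i(Y_{\ov{K}_0})$. If instead $z=\ell^N z'$ with $z'\in CH^i(Y_{\ov{K}_0})$, then $\ell^N(\alpha-\cl(z'))=0$ in $H^{2i}(Y_{\ov{K}_0},\Z_\ell(i))=H^{2i}(Y_{(k_0)_s},\Z_\ell(i))$, so $\alpha-\cl(z')$ is $\ell^N$-torsion. A standard specialization argument for cycle classes under extensions of algebraically closed fields (cycle classes are locally constant in families over a geometrically connected smooth base) yields $H^{2i}_{\alg}(Y_{\ov{K}_0},\Z_\ell(i))=H^{2i}_{\alg}(Y_{(k_0)_s},\Z_\ell(i))$, so $\cl(z')$ is already algebraic over $(k_0)_s$ and $\alpha\in H^{2i}_{\alg}(Y_{(k_0)_s},\Z_\ell(i))+\tors$, contradicting the hypothesis.
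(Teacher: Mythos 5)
The part of your argument that shows $\lambda(\eta)=0$ is fine and is essentially the paper's computation (the compatibility $\lambda(z\times\tau)=\cl(z)\cup\lambda(\tau)$ is exactly the commutative square used there). The genuine gap is in the non-vanishing step. You assert that the Chow--K\"unneth idempotent cut out by $\pi_1=\Delta_E-E\times\{O\}-\{O\}\times E$ has image in $CH^{i+1}(X_{\ov{K}_0})$ ``naturally identified with $CH^i(Y_{\ov{K}_0})\otimes_{\Z}\Pic^0(E_{\ov{K}_0})$ via $z'\otimes D\mapsto z'\times D$.'' There is no K\"unneth formula for Chow groups: the Deninger--Murre/Beauville projector does give a direct summand of $CH^{i+1}(Y\times E)$ containing all classes $z'\times D$ with $D\in\Pic^0(E)$, but the exterior product map $CH^i(Y)\otimes\Pic^0(E)\to CH^{i+1}(Y\times E)$ is not formally injective, and its injectivity (on the relevant $\ell$-primary torsion) is precisely the hard content of the lemma. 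The paper supplies it by quoting Schoen's theorem \cite[Theorem 0.2]{schoen2000certain} on exterior product maps of Chow groups; if your identification were formal, Schoen's theorem would be trivial. So your reduction of ``$\eta\neq 0$'' to ``$z\notin\ell^N CH^i(Y_{\ov{K}_0})$'' is unjustified, and the subsequent specialization argument, while fine in itself, sits downstream of this unproved identification.

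Relatedly, you take $K_0=k_0(t)$ and ``any elliptic curve $E$ over $K_0$.'' Schoen's theorem requires $E$ to be non-isotrivial over $\ov{k}_0$ (the paper chooses $E$ with $j(E)\notin\ov{k}_0$); for a constant curve $E$ defined over $\ov{k}_0$ the needed injectivity is not available (and by rigidity of torsion in Chow groups nothing is gained by passing to $\ov{K}_0$), so the hypothesis on $E$ cannot be dropped. There is also a minor imprecision in your unpacking of the hypothesis ($H^{2i}_{\alg}$ is the image of $CH^i\otimes_{\Z}\Z_\ell$, and nonvanishing in $\widetilde{Z}^{2i}\{\ell\}$ is a condition modulo the image of $H^{2i}(\cdot)_{\tors}$), but that is repairable; the essential missing ingredient is the injectivity of the exterior product map, i.e.\ Schoen's theorem together with the transcendental $j$-invariant condition on $E$.
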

	\begin{proof}
	We only do the first case; the second case is similar (also see \cite[Proposition 3.1]{suzuki2020remark}).
	After tensor $\Q_{\ell}/\Z_{\ell}$,
	the short exact sequence
	\[
	0\rightarrow H^{2i}_{\alg}(Y_{(k_{0})_s},\Z_\ell(i))\rightarrow H^{2i}(Y_{(k_{0})_s},\Z_\ell(i))^{(1)}\rightarrow H^{2i}(Y_{(k_{0})_s},\Z_\ell(i))^{(1)}/H^{2i}_{\alg}(Y_{(k_{0})_s},\Z_\ell(i))\rightarrow 0
	\]
	yields an exact sequence
	\[
	0\rightarrow \widetilde{Z}^{2i}_{\text{\'et},\ell}(Y_{(k_{0})_s})\{\ell\}
	\rightarrow H^{2i}_{\alg}(Y_{(k_{0})_s},\Z_\ell(i))\otimes \Q_\ell/\Z_\ell\rightarrow H^{2i}(Y_{(k_{0})_s},\Z_\ell(i))\otimes \Q_\ell/\Z_\ell.
	\]
	From the assumption, we now see that there exists a non-zero $\alpha\in CH^i(Y_{(k_{0})_s})\otimes \Q_\ell/\Z_\ell$ that vanishes 
	in $H^{2i}(Y_{(k_{0})_s},\Z_\ell(i))\otimes \Q_\ell/\Z_\ell$.
	Note that, by passing to the algebraic closure, we get isomorphisms
	\[
	CH^i(X_{(k_{0})_s})\otimes \Q_\ell/\Z_\ell\xrightarrow{\sim}CH^i(X_{\ov{k}_0})\otimes\Q_\ell/\Z_\ell,\,H^{2i}(X_{(k_{0})_s},\Z_\ell(i))\xrightarrow{\sim}H^{2i}(X_{\ov{k}_0},\Z_\ell(i)).
	\]
	Let $\alpha'\in CH^i(X_{\ov{k}_0})\otimes \Q_\ell/\Z_\ell$ be the image of $\alpha$.
	
	Let $K_0/k_0$ be a finitely generated field extension with $\on{tr.deg}_{k_0} K_0=1$ and $E$ be an elliptic curve over $K_0$ with $j(E)\not\in \ov{k}_0$.
	Fixing a component $\Q_\ell/\Z_\ell$ of $CH^1(E_{\ov{K}_0})\{\ell\}=(\Q_\ell/\Z_\ell)^2$, we indentify $\alpha'$ with an element in $CH^{i}(Y_{\ov{k}_0})\otimes CH^1(E_{\ov{K}_0})\{\ell\}$.
	Letting $X\coloneqq Y\times_{k_0} E$,
	a theorem of Schoen \cite[Theorem 0.2]{schoen2000certain} shows that the image $\beta$ of $\alpha'$ under the exterior product map
	\[CH^i(Y_{\ov{k}_0})\otimes CH^1(E_{\ov{K}_0})\{\ell\}\xrightarrow{\times} CH^{i+1}(X_{\ov{K}_0})\{\ell\}\]
	is non-zero.
	Now it remains for us to show that $\beta\in CH^{i+1}(X_{\ov{K}_0})\{\ell\}$ is in the kernel of $\lambda$.
	This follows from the commutative diagram:
	\[
	\begin{tikzcd}
	CH^i(Y_{\ov{k}_0})\otimes CH^1(E_{\ov{K}_0})\{\ell\}\ar[r,"\cl\otimes\lambda"]\ar[d,"\times"]& H^{2i}(Y_{\ov{k}_0},\Z_\ell(i))\otimes H^1(E_{\ov{K}_0},\Q_\ell/\Z_\ell(1))\ar[d,"\cup"]\\
	CH^{i+1}(X_{\ov{K}_0})\ar[r,"\lambda"]& H^{2i+1}(X_{\ov{K}_0},\Q_\ell/\Z_\ell(i+1)).
	\end{tikzcd}
	\]
	The proof is complete.
	\end{proof}
	

	\cref{lem-tatehodgetobloch} can be applied to non-torsion type counterexamples to the integral Hodge conjecture \cite{kollar1992trento,colliot2012cohomologie,totaro2013integral,diaz2020unramified,ottem2020pencil} or the integral Tate conjecture \cite{totaro2013integral,pirutka2015note}.
	One may take $k_0=\Q$ for the examples in \cite{kollar1992trento,colliot2012cohomologie,totaro2013integral,diaz2020unramified,ottem2020pencil} and $k_0$ to be a finite field for the examples in \cite{pirutka2015note}.
	
	\cref{thm-blochjannsen} then produces various examples of fields $K$ of finite type over the prime fields of transcendence degree $1$, prime numbers $\ell$ invertible in $K$, and smooth projective $K$-varieties $X$ such that
	$\cl\colon CH^3(X)[\ell]\rightarrow H^6(X,\Z_\ell(3))$
	is not injective.
	Those with the best bounds are: 
	fourfolds in characteristic zero;
	eightfolds in positive characteristic.
	
\section{Proof of Theorem \ref{quadric}}\label{section-quadric}

	\begin{lem}\label{tate-module}
		Let $k$ be a field and $\ell$ be a prime invertible in $k$. Then $H^2(k,\Z_{\ell}(1))\simeq T_{\ell}(\on{Br}(k))$. In particular, $H^2(k,\Z_{\ell}(1))$ is torsion-free.
	\end{lem}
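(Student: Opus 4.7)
The plan is to exploit the Kummer sequence of étale sheaves
\[
1 \to \mu_{\ell^n} \to \Gm \xrightarrow{\ell^n} \Gm \to 1
\]
and pass to continuous cohomology. Taking Galois cohomology and using Hilbert 90 ($H^1(k,\Gm)=0$) gives
\[
0 \to H^1(k,\Gm)/\ell^n \to H^2(k,\mu_{\ell^n}) \to \Br(k)[\ell^n] \to 0,
\]
so $H^2(k,\mu_{\ell^n}) \simeq \Br(k)[\ell^n]$, and the Kummer sequence also identifies $H^1(k,\mu_{\ell^n})$ with $k^\times/(k^\times)^{\ell^n}$.

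Next I would apply Jannsen's Milnor short exact sequence
\[
0 \to {\varprojlim}^{1}\, H^{1}(k,\mu_{\ell^n}) \to H^{2}(k,\Z_{\ell}(1)) \to \varprojlim H^{2}(k,\mu_{\ell^n}) \to 0
\]
from \cite{jannsen1988continuous}. The transition maps $H^1(k,\mu_{\ell^{n+1}}) \to H^1(k,\mu_{\ell^n})$ correspond under Kummer theory to the canonical surjections $k^\times/(k^\times)^{\ell^{n+1}} \twoheadrightarrow k^\times/(k^\times)^{\ell^n}$, hence the inverse system satisfies Mittag-Leffler and the $\varprojlim^{1}$ term vanishes. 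The transition maps $\Br(k)[\ell^{n+1}] \to \Br(k)[\ell^n]$ induced by $\mu_{\ell^{n+1}} \xrightarrow{(\cdot)^{\ell}} \mu_{\ell^n}$ are multiplication by $\ell$, so the remaining inverse limit is by definition $T_\ell(\Br(k))$. This yields the asserted isomorphism $H^2(k,\Z_\ell(1)) \simeq T_\ell(\Br(k))$.

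For torsion-freeness, it is a general fact that the Tate module $T_\ell(A) = \varprojlim A[\ell^n]$ of any abelian group $A$ is torsion-free: if $x=(x_n) \in T_\ell(A)$ is annihilated by $\ell^m$, then using that the transition map $A[\ell^{n+m}] \to A[\ell^n]$ is multiplication by $\ell^m$, one has $x_n = \ell^m x_{n+m} = 0$ for every $n$, so $x=0$. No step here looks difficult; the only mild subtlety is verifying the identification of the transition maps in the Kummer system, which I would spell out as above to confirm the Mittag-Leffler property.
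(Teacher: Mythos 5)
Your proposal is correct and follows essentially the same route as the paper: the Milnor/$\varprojlim^1$ exact sequence for continuous cohomology, the Kummer identifications of $H^1$ and $H^2$ with finite coefficients, surjectivity of the transition maps on $k^\times/(k^\times)^{\ell^n}$ to kill the $\varprojlim^1$ term, and the identification of the remaining limit with $T_\ell(\Br(k))$. The only addition is your explicit verification that Tate modules are torsion-free, which the paper leaves implicit.
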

	
	\begin{proof}
		By \cite[Theorem 2.7.5]{neukirch2008cohomology}, we have a short exact sequence
		\[0\to {\varprojlim_m}^1H^1(k,\mu_{\ell^m})\to H^2(k,\Z_{\ell}(1))\to \varprojlim_m H^2(k,\mu_{\ell^m})\to 0.\]
		The Kummer sequence \[1\to \mu_{\ell^m}\to \Gm\to \Gm\to 1\] gives natural identifications \[H^1(k,\mu_{\ell^m})=k^{\times}/k^{\times \ell^m},\qquad H^2(k,\mu_{\ell^m})=\on{Br}(k)[\ell^m].\] The induced maps $k^{\times}/k^{\times \ell^{m+1}}\to k^{\times}/k^{\times \ell^m}$ are the natural quotient maps, and in particular they are surjective. It follows that the sequence of the $H^1(k,\mu_{\ell^m})$ satisfies the Mittag-Leffler condition and so ${\varprojlim}^1H^1(k,\mu_{\ell^m})=0$. The induced maps $\on{Br}(k)[\ell^{m+1}]\to \on{Br}(k)[\ell^m]$ are given by multiplication by $\ell$, hence $\varprojlim H^2(k,\mu_{\ell^m})=T_{\ell}(\on{Br}(k))$.
	\end{proof}

\begin{lem}\label{h4-vanishes-kt}
    Let $k$ be a global field, $\ell$ be a prime number invertible in $k$, and $k(t)/k$ be a purely transcendental extension of transcendence degree $1$. If $\ell=2$, suppose that $k$ is totally imaginary. Then $H^4(k(t),\Z_{\ell}(2))=0$.
\end{lem}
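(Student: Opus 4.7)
The plan is to combine Jannsen's $\varprojlim^{1}$ description of continuous $\ell$-adic cohomology with the Gysin/localization sequence for the open immersion $\operatorname{Spec} k(t)\hookrightarrow \A^{1}_{k}$. Specifically, exactly as recalled in the proof of \cref{tate-module}, there is a short exact sequence
\[
0\to {\varprojlim_{m}}{}^{1}\, H^{3}(k(t),\mu_{\ell^{m}}^{\otimes 2}) \to H^{4}(k(t),\Z_{\ell}(2)) \to \varprojlim_{m} H^{4}(k(t),\mu_{\ell^{m}}^{\otimes 2})\to 0,
\]
so it suffices to show that both flanking terms vanish.

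For the right-hand term, I would first bound the finite-coefficient cohomology. Homotopy invariance gives $H^{i}(\A^{1}_{k},\mu_{\ell^{m}}^{\otimes 2})\simeq H^{i}(k,\mu_{\ell^{m}}^{\otimes 2})$, and combined with absolute purity for the closed points of $\A^{1}_{k}$ this yields the Faddeev-type long exact sequence
\[
\cdots\to H^{i}(k,\mu_{\ell^{m}}^{\otimes 2})\to H^{i}(k(t),\mu_{\ell^{m}}^{\otimes 2})\to \bigoplus_{x\in (\A^{1}_{k})^{(1)}} H^{i-1}(\kappa(x),\mu_{\ell^{m}})\to H^{i+1}(k,\mu_{\ell^{m}}^{\otimes 2})\to\cdots.
\]
Under the standing hypothesis on $k$, we have $\operatorname{cd}_{\ell}(k)\leq 2$, and the same bound holds for every residue field $\kappa(x)$ (a finite extension of $k$ is again global, and totally imaginary when $k$ is). Thus for $i=3,4$ the outer terms vanish, giving
\[
H^{3}(k(t),\mu_{\ell^{m}}^{\otimes 2})\simeq \bigoplus_{x} \operatorname{Br}(\kappa(x))[\ell^{m}],\qquad H^{4}(k(t),\mu_{\ell^{m}}^{\otimes 2})=0.
\]

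The remaining and key point is to verify the Mittag-Leffler condition for the inverse system $\bigoplus_{x}\operatorname{Br}(\kappa(x))[\ell^{m}]$. The transition maps $\mu_{\ell^{m+1}}\to\mu_{\ell^{m}}$ act by the $\ell$-th power map, which translates to multiplication by $\ell$ on each $\operatorname{Br}(\kappa(x))[\ell^{m}]$. Hence I need $\operatorname{Br}(\kappa(x))\{\ell\}$ to be $\ell$-divisible for every closed point $x$. This is exactly where the global-field assumption enters: by the Brauer--Hasse--Noether fundamental sequence
\[
0\to \operatorname{Br}(\kappa(x))\to \bigoplus_{v} \operatorname{Br}(\kappa(x)_{v})\to \Q/\Z\to 0,
\]
each nonarchimedean local Brauer group is $\Q/\Z$, and the totally imaginary hypothesis (when $\ell=2$) removes the real places, where the $2$-primary obstruction to divisibility lives. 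The middle term is therefore divisible, the sequence splits as abelian groups, and $\operatorname{Br}(\kappa(x))$ itself is divisible. Consequently multiplication by $\ell$ is surjective on each $\operatorname{Br}(\kappa(x))[\ell^{m}]$, hence on the direct sum, and the image $\operatorname{Im}\bigl(\bigoplus_{x}\operatorname{Br}(\kappa(x))[\ell^{n}]\to \bigoplus_{x}\operatorname{Br}(\kappa(x))[\ell^{m}]\bigr)$ already equals the whole target at stage $n=m+1$, uniformly in $x$. Mittag-Leffler holds, so the $\varprojlim^{1}$ vanishes and $H^{4}(k(t),\Z_{\ell}(2))=0$.

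The main obstacle is the $\varprojlim^{1}$ step rather than the cohomological dimension step: naive stabilization of images fails site-by-site is not enough for an infinite direct sum, which is why I need the divisibility argument producing a uniform stabilization at the very first step. The totally imaginary hypothesis at $\ell=2$ is visibly needed here, as real places would contribute a non-$2$-divisible $\Z/2$ summand to some $\operatorname{Br}(\kappa(x))$ and break the argument.
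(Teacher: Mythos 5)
Your argument is essentially the paper's own proof: the Jannsen/Neukirch--Schmidt--Wingberg ${\varprojlim}^1$ sequence, vanishing of $H^4(k(t),\mu_{\ell^m}^{\otimes 2})$ from cohomological-dimension bounds, the identification $H^3(k(t),\mu_{\ell^m}^{\otimes 2})\simeq \oplus_x \on{Br}(\kappa(x))[\ell^m]$ with transition maps given by multiplication by $\ell$, and divisibility of the residue-field Brauer groups (Albert--Brauer--Hasse--Noether) to get Mittag--Leffler and hence ${\varprojlim}^1=0$. The only differences are cosmetic: you work with the $\A^1$-localization sequence plus homotopy invariance and purity (deriving both the $H^4$ vanishing and the $H^3$ decomposition from it), while the paper quotes the Faddeev sequence over $\P^1$ from Serre, splits off the contribution at infinity via the corestriction, and gets $H^4(k(t),\mu_{\ell^n}^{\otimes 2})=0$ from $\cd_\ell(k(t))\le 3$; both routes are fine.

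One small imprecision: the statement allows $\ell$ odd with $k$ having real places (e.g.\ $k=\Q$), and in that case $\oplus_v\on{Br}(\kappa(x)_v)$ and $\on{Br}(\kappa(x))$ are \emph{not} divisible --- by Albert--Brauer--Hasse--Noether, $\on{Br}(\kappa(x))$ is a divisible group plus a finite elementary $2$-group --- so your sentence ``the middle term is therefore divisible, \dots, and $\on{Br}(\kappa(x))$ itself is divisible'' is literally false there. What you actually need, and what your own closing remark essentially observes, is only that multiplication by $\ell$ is surjective on $\on{Br}(\kappa(x))\{\ell\}$, which holds for odd $\ell$ since the $2$-torsion summands are $\ell$-divisible; this is exactly the paper's second case, so the fix is one line. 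Also, the splitting of the fundamental sequence you invoke should be justified by exhibiting a section (the invariant map at any chosen finite place), not deduced from divisibility of the middle term; alternatively one can check divisibility of the kernel directly.
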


\begin{proof}
By \cite[Theorem 2.7.5]{neukirch2008cohomology}, 
we have a short exact sequence
\begin{equation}\label{lim1}0\to {\varprojlim_n}^1H^3(k(t),\mu_{\ell^n}^{\otimes 2})\to H^4(k(t),\Z_{\ell}(2))\to \varprojlim_n H^4(k(t),\mu_{\ell^n}^{\otimes 2})\to 0.\end{equation}
By \cite[II.4.4, Proposition 13]{serre1997galois} we have $\cd_{\ell}(k)\leq 2$, and so \cite[II.4.2, Proposition 11]{serre1997galois} implies $\cd_\ell(k(t))\leq 3$. It follows that the group $H^4(k(t),\mu_{\ell^n}^{\otimes 2})$ is trivial for all $n\geq 0$, hence $\varprojlim_n H^4(k(t),\mu_{\ell^n}^{\otimes 2})=0$. In view of (\ref{lim1}), the proof will be complete once we show that $\varprojlim_n^1H^3(k(t),\mu_{\ell^n}^{\otimes 2})=0$.

We regard $k(t)$ as the function field of $\P^1_k$. By \cite[p. 113]{serre1997galois} we have an exact sequence
\[0\to H^3(k,\mu_{\ell^n}^{\otimes 2})\to H^3(k(t),\mu_{\ell^n}^{\otimes 2}) \to \oplus_{x\in (\P^1_k)^{(1)}}H^2(k(x),\mu_{\ell^n})\xrightarrow{C} H^2(k,\mu_{\ell^n})\to 0\]
which is functorial in $n\geq 0$. Since $\cd_{\ell}(k)\leq 2$, the first term $H^3(k,\mu_{\ell^n}^{\otimes 2})$ vanishes. The surjective map $C$ is the direct sum of the corestriction maps along the field extensions $k(x)/k$, and so the point at infinity $\infty \in \P^1_k$ determines a section of $C$. We obtain a decomposition
\begin{equation}\label{residues-splits}H^3(k(t),\mu_{\ell^n}^{\otimes 2})\simeq\oplus_{x\in (\A^1_k)^{(1)}}H^2(k(x),\mu_{\ell^n})\simeq\oplus_{x\in (\A^1_k)^{(1)}}\on{Br}(k(x))[\ell^n].\end{equation}
 The isomorphism on the right comes from the Kummer short exact sequence. The isomorphism (\ref{residues-splits}) is functorial in $n$, where on the right the transition maps $\on{Br}(k(x))[\ell^{n+1}]\to \on{Br}(k(x))[\ell^n]$ are given by multiplication by $\ell$.

Suppose first that $k$ is totally imaginary. Then for every closed point $x$ of $\A^1_k$ the residue field $k(x)$ is also totally imaginary. It follows from the celebrated theorem of Albert, Brauer, Hasse and Noether \cite[Theorem 8.1.17]{neukirch2008cohomology} that  $\on{Br}(k(x))$ is divisible. Thus the maps $\on{Br}(k(x))[\ell^{n+1}]\to \on{Br}(k(x))[\ell^n]$ given by multiplication by $\ell$ are surjective, hence by (\ref{residues-splits}) so are the transition maps $H^3(k(t),\mu_{\ell^{n+1}}^{\otimes 2})\to H^3(k(t),\mu_{\ell^n}^{\otimes 2})$. This shows that the inverse system $\{H^3(k(t),\mu_{\ell^n}^{\otimes 2})\}_{n\geq 0}$ satisfies the Mittag-Leffler condition, and so $\varprojlim_n^1H^3(k(t),\mu_{\ell^n}^{\otimes 2})=0$ by \cite[Proposition 2.7.4]{neukirch2008cohomology}, as desired. 

Suppose now that $k$ is not totally imaginary. Then under our assumptions $\ell\neq 2$. By \cite[Theorem 8.1.17]{neukirch2008cohomology}, the group $\on{Br}(k(x))$ is the direct sum of a divisible group and a finite elementary $2$-group. Then, since $\ell$ is odd, the maps $\on{Br}(k(x))[\ell^{n+1}]\to \on{Br}(k(x))[\ell^n]$ given by multiplication by $\ell$ are surjective, and the conclusion follows as in the totally imaginary case.
\end{proof}

\cref{quadric} is a special case of the following more general statement.

\begin{thm}\label{quadric2}
Let $k$ be a global field, $k(t)$ be a purely transcendental extension of $k$ of transcendence degree $1$, and $\ell$ be a prime invertible in $k$. If $\ell=2$, suppose that $k$ is totally imaginary, and if $\ell$ is odd suppose that $\on{char}(k)=0$. Then there exists a norm variety $X$ of dimension $\ell^2-1$ over $k(t)$ such that \[\cl\colon CH^2(X)[\ell]\to H^4(X,\Z_\ell(2))\]
is not injective.
\end{thm}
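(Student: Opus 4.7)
The plan is to combine the Hochschild-Serre spectral sequence strategy used for Theorem \ref{complete-intersection} with the arithmetic vanishing results in Lemmas \ref{tate-module} and \ref{h4-vanishes-kt}. Explicitly, I would exhibit a norm variety $X$ of dimension $\ell^2 - 1$ over $k(t)$ together with a nonzero $\beta \in CH^2(X)[\ell]$ such that (a) $\beta$ is geometrically trivial, so $\cl(\beta) \in F^1 H^4(X,\Z_\ell(2))$ in the filtration coming from the spectral sequence
\[
E_2^{p,q} = H^p(k(t), H^q(X_{\overline{k(t)}}, \Z_\ell(2))) \Rightarrow H^{p+q}(X, \Z_\ell(2)),
\]
and (b) every graded piece $F^p/F^{p+1}$ is torsion-free, forcing $\cl(\beta) = 0$.

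For the construction, I would take $X$ to be the norm variety associated to a carefully chosen non-trivial symbol $\alpha \in K^M_3(k(t))/\ell$; for $\ell = 2$, $X$ is the three-dimensional Pfister quadric in $\P^4_{k(t)}$ defined by the corresponding $3$-fold Pfister form. Non-trivial symbols are provided by the residue decomposition (\ref{residues-splits}) from the proof of Lemma \ref{h4-vanishes-kt}: one may take $\alpha$ of the form $(a_1) \cdot (a_2) \cdot (t)$ with suitable $a_i \in k^\times$, ensuring non-trivial residues at some closed point of $\P^1_k$. The Rost--Merkurjev theory of norm varieties then produces a canonical $\ell$-torsion class $\beta \in CH^2(X)$ tied to the symbol $\alpha$, accessible via Bloch's map and the Bloch-Kato isomorphism. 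Geometric triviality of $\beta$ is automatic because, after base change to the algebraic closure, the norm variety is (generically $p$-)cellular, so $CH^2(X_{\overline{k(t)}})$ is torsion-free.

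For the graded pieces, one uses that the geometric cohomology $H^q(X_{\overline{k(t)}}, \Z_\ell(2))$ is concentrated in even degrees, so that the diagonal $p+q=4$ contributes only $E_2^{0,4} \simeq \Z_\ell$, $E_2^{2,2} \simeq H^2(k(t), \Z_\ell(1))$ and $E_2^{4,0} \simeq H^4(k(t), \Z_\ell(2))$. The last group vanishes by Lemma \ref{h4-vanishes-kt} and the second is torsion-free by Lemma \ref{tate-module}. Since all odd-$q$ terms on the diagonal vanish, the incoming differentials into the diagonal also vanish, and each $E_\infty^{p,4-p}$ is therefore a \emph{subgroup}, not merely a subquotient, of the torsion-free group $E_2^{p,4-p}$. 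Any $\ell$-torsion class landing in $F^1 H^4(X, \Z_\ell(2))$ must then descend into every $F^p$, hence equal zero.

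The principal obstacle lies in step (a): producing the nonzero $\ell$-torsion cycle $\beta \in CH^2(X)[\ell]$ and verifying its non-triviality. This is the content of the classical but highly non-trivial theory of norm varieties, ultimately relying on the Bloch-Kato conjecture (Voevodsky-Rost) together with Karpenko-Merkurjev style arguments. A secondary technical point, for odd $\ell$ where $X$ is a higher-dimensional Rost variety rather than a quadric, is to verify the required structure of $H^*(X_{\overline{k(t)}}, \Z_\ell)$; the needed facts follow from the motivic decomposition into Rost motives.
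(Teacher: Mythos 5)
For $\ell=2$ your outline is essentially the paper's argument: the explicit Pfister-neighbor quadric $X\subset\P^4_{k(t)}$ (note: a quadric of dimension $3$ in $\P^4$, i.e.\ a neighbor of the $3$-fold Pfister form, not the full Pfister quadric), a nonzero class in $CH^2(X)_{\tors}\simeq\Z/2$ supplied by Karpenko, and the Hochschild--Serre filtration on $H^4(X,\Z_\ell(2))$ whose relevant graded pieces are torsion-free by \Cref{tate-module} and \Cref{h4-vanishes-kt}. There the split quadric is indeed geometrically cellular and the paper itself remarks that one may take $\rho=\id$.

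For odd $\ell$, however, there is a genuine gap. Your argument needs the geometric cohomology $H^*(X_{\overline{k(t)}},\Z_\ell)$ of the norm variety to be concentrated in even degrees (to kill $E_2^{1,3}$ and to make $E_\infty^{2,2}$ a subgroup of the torsion-free $E_2^{2,2}$), and you justify this by claiming $X$ is geometrically cellular, resp.\ that ``the motivic decomposition into Rost motives'' gives the needed structure. Neither claim is available: for odd $\ell$ a standard norm variety of a weight-$3$ symbol is not geometrically cellular, and its Chow motive does \emph{not} decompose into Rost (or Tate) motives --- only a single Rost motive $\mc{R}$ is known to split off as a direct summand, while the complementary summand is not controlled, so odd-degree geometric cohomology of $X$ cannot be ruled out this way. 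The same unfounded cellularity claim underlies your assertion that $CH^2(X_{\overline{k(t)}})$ is torsion-free. The paper's proof avoids exactly this: the torsion class $\alpha$ is taken in $CH^2(\mc{R})=\Z/\ell$ (Karpenko--Merkurjev, Theorem RM.10), and the projector $\rho$ onto $\mc{R}$ is applied to the whole Hochschild--Serre spectral sequence, using its naturality with respect to correspondences. Since $\mc{R}_{k(t)_s}$ is a sum of Tate motives, $\rho^*H^{\mathrm{odd}}(X_{k(t)_s},\Z_\ell)=0$ and $\rho^*H^{2j}\simeq\Z_\ell(-j)^{\oplus r_{2j}}$, so the torsion-freeness argument (via \Cref{tate-module}, \Cref{h4-vanishes-kt}, and the fact that the differentials are torsion) applies to $\rho^*F^{\bullet}$ and shows $\rho^*H^4(X,\Z_\ell(2))$ is torsion-free, which suffices because $\cl(\alpha)\in\rho^*H^4(X,\Z_\ell(2))$. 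To repair your proposal you would need to introduce this restriction to the Rost summand (or otherwise control the full geometric cohomology of $X$, which is not known). A minor additional slip: even granting even-degree concentration, incoming differentials into the $p+q=4$ diagonal from sources with even $q$ (e.g.\ $E_3^{1,2}\to E_3^{4,0}$) need not vanish; this happens to be harmless only because $E_2^{4,0}=H^4(k(t),\Z_\ell(2))=0$.
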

\begin{proof}
By (\ref{residues-splits}) and the theorem of Albert, Brauer, Hasse
and Noether \cite[Theorem 8.1.17]{neukirch2008cohomology}, we have $H^3(k(t),\mu_{\ell}^{\otimes 2})\neq 0$. Let $X$ be a norm variety associated to a non-trivial symbol $s\in H^3(k(t),\mu_{\ell}^{\otimes 2})$, as constructed by Rost \cite{suslin2006norm}; see also \cite[\S 5d]{karpenko2013standard}. The $k$-variety $X$ is smooth projective of dimension $\ell^2-1$. The pure Chow motive with $\Z_{(\ell)}$-coefficients $M(X;\Z_{(\ell)})$ of $X$ contains the Rost motive $\mc{R}$ of $s$ as a direct summand. By \cite[Theorem RM.10]{karpenko2013standard}, we have $CH^2(\mc{R})=\Z/\ell$, hence $CH^2(X)[\ell]\neq 0$. (We apply \cite[Theorem RM.10]{karpenko2013standard} with $p=\ell$, $n=2$, $k=1$ and $i=1$. By definition $b=1+p$, hence $j=bk-p^i+1=2$.) Let $\alpha\in CH^2(\mc{R})[\ell]$ be a non-zero element.

If $\ell=2$, we may construct $X$ and $\alpha$ in any characteristic different from $2$ as follows. Let $\mc{O}$ be the ring of integers of $k$, $\pi\in \mc{O}$ be a prime element, and $u\in \mc{O}$ be such that the class of $u$ in the residue field $\mc{O}/\pi$ is not a square. The quadratic form \[q_0\coloneqq \ang{1,-u}\otimes\ang{1,-\pi}=\ang{1,-u,-\pi,u\pi}\] over $k$ is the norm form for the quaternion algebra $(u,\pi)$, hence it is anisotropic over $k$. By \cite[VI. Proposition 1.9]{lam2005introduction}, the quadratic form
\[q\coloneqq q_0\perp t\ang{1}=\ang{1,-u,-\pi,u\pi,t}\] is anisotropic over $k(\!(t)\!)$, hence over $k(t)$. Let $X\subset \P^4_{k(t)}$ be the smooth projective quadric hypersurface over $k(t)$ defined by $q=0$. By \cite[Theorem 5.3]{karpenko1990algebro} we have $CH^2(X)_{\on{tors}}\simeq\Z/2$. (In the notation of \cite[p. 120]{karpenko1990algebro}, $q=\ang{\!\ang{u,\pi}\!}\perp\ang{t}$.) We let $\alpha\in CH^2(X)_{\on{tors}}$ be the generator. The quadratic form $q$ is a neighbor of the Pfister form $\ang{\!\ang{u,\pi,-t}\!}$, hence $X$ is a norm variety for the symbol $(u)\cup(\pi)\cup (-t)\in H^3(k(t),\Z/2)$.

\medskip

We are going to prove that $\cl$ is not injective in codimension $2$ by showing that $\cl(\alpha)=0$ in $H^4(X,\Z_{\ell}(2))$. 
Consider the Hochschild-Serre spectral sequence in continuous $\ell$-adic cohomology 
		\begin{equation}\label{hochschild-serre-quadric}E_2^{i,j}=H^i(k(t),H^j(X_{k(t)_s},\Z_\ell(2)))\Rightarrow H^{i+j}(X,\Z_\ell(2)).\end{equation}
		It yields a filtration
		\[\{0\}=F^5\subset F^4\subset\cdots \subset F^1\subset F^0=H^4(X,\Z_\ell(2))\]
		where $F^i/F^{i+1}$ is a subquotient (resp. a submodule) of $H^i(k,H^{4-i}(X_{k(t)_s},\Z_\ell(2)))$ for all $0\leq i\leq 4$ (resp. for $i=0,1$). Let $\rho\colon M(X;\Z_{(\ell)})\to M(X;\Z_{(\ell)})$ be the projector onto the direct summand $\mc{R}$, so that $\alpha \in \rho^*CH^2(X)$. (When $\ell=2$ and $X$ is the quadric described above, we could also take $\rho=\on{id}$ in what follows.) Since the Hochschild-Serre spectral sequence is natural with respect to correspondences, $\rho$ and $1-\rho$ respect $F^{\cdot}$ and determine a direct sum decomposition $F^{\cdot}=\rho^*F^{\cdot}\oplus(1-\rho^*)F^{\cdot}$, where $\rho^*F^{i}/\rho^*F^{i+1}$ is a subquotient (resp. a submodule) of $H^i(k,\rho^*H^{4-i}(X_{k(t)_s},\Z_\ell(2)))$ for all $0\leq i\leq 4$ (resp. for $i=0,1$).
	
	The Rost motive $\mc{R}_{k(t)_s}$ is a finite direct sum of powers of the Tate motive. Thus, for all $j\geq 0$ we have $\rho^*H^{2j+1}(X_{k(t)_s},\Z_{\ell})=0$ and $\rho^*H^{2j}(X_{k(t)_s},\Z_{\ell})\simeq \Z_{\ell}(-j)^{\oplus r_{2j}}$ for some integers $r_{2j}\geq 0$. It follows that \[H^1(k(t),\rho^*H^3(X_{k(t)_s},\Z_{\ell}(2)))=H^3(k(t),\rho^*H^1(X_{k(t)_s},\Z_{\ell}(2)))=0.\] 
	Since $H^0(X_{k(t)_s},\Z_{\ell}(2))\simeq \Z_{\ell}(2)$, the direct summand $\rho^*H^0(X_{k(t)_s},\Z_{\ell}(2))$ is either $0$ or $\Z_{\ell}(2)$. (As $CH^0(\mc{R})=\Z_{(\ell)}$ by \cite[Theorem RM.10]{karpenko2013standard}, we actually have $\rho^*H^0(X_{k(t)_s},\Z_{\ell}(2))=\Z_{\ell}(2)$.) Thus, by \Cref{h4-vanishes-kt}, \[H^4(k(t),\rho^*H^0(X_{k(t)_s},\Z_{\ell}(2)))=0.\]
	We deduce that $\rho^*F^1=\rho^*F^2$ and $\rho^*F^3=\rho^*F^4=\rho^*F^5=0$. Therefore $\rho^*F^1=\rho^*F^2/\rho^*F^3$, that is, we have an exact sequence
    \begin{equation}\label{f2f3}
    0\to \rho^*F^2/\rho^*F^3\to \rho^*H^4(X,\Z_\ell(2))\to \rho^*H^4(X_{k(t)_s},\Z_\ell(2)).
    \end{equation}
	We know that $\rho^*H^4(X_{k(t)_s},\Z_\ell(2))\simeq \Z_\ell^{\oplus r_4}$ is torsion-free. By \Cref{tate-module} the group
    \[H^2(k(t),\rho^*H^2(X_{k(t)_s},\Z_{\ell}(2)))\simeq T_\ell(\on{Br}(k(t)))^{\oplus r_2}\]
     is also torsion-free. By \cite[p. 262 and footnote 3]{jannsen2000letter} and \cite{ekedahl1990adic} (see also the announcement in \cite[Remark 6.15(b)]{jannsen1988continuous}), all differentials in (\ref{hochschild-serre-quadric}) are torsion, hence $\rho^*F^2/\rho^*F^3$ is torsion-free. Now (\ref{f2f3}) implies that $\rho^*H^4(X,\Z_\ell(2))$ is torsion-free. Since $\cl(\alpha)\in \rho^*H^4(X,\Z_\ell(2))$ and $\ell\cl(\alpha)=0$, we conclude that $\cl(\alpha)=0$.
\end{proof}

\begin{rem}[Colliot-Th\'el\`ene]
    We sketch a more direct proof of the fact, used in the proof of \Cref{quadric2}, that the group $H^3(k(t),\mu_{\ell}^{\otimes 2})$ is non-zero. We first note that if a symbol $(a,b)\in \on{Br}(k)[\ell]=H^2(k,\mu_\ell)$ is non-zero, then the residue of $(a,b,t)\in H^3(k(t),\mu_{\ell}^{\otimes 2})$ is non-zero, hence $(a,b,t)\neq 0$. Therefore, it suffices to show that $\on{Br}(k)[\ell]\neq 0$ for all global fields $k$. 
    
    One can show that $\on{Br}(k)[2]\neq 0$ by constructing a conic $X^2-aY^2-bT^2=0$ over $k$ without rational points. If $\ell$ is odd, one can construct a non-zero element of $\on{Br}(k)[\ell]$ by taking a cyclic extension $K/k$ of degree $\ell$, a place $v$ where $K/k$ is inert (using the Chebotarev Density Theorem), an element $c\in k_v^\times$ which is not a norm from $K_v^\times$, and approximating $c$ by an element of $k^\times$.
\end{rem}

\begin{rem}
    One might wonder if there exist a number field $k$, a prime number $\ell$, a non-trivial mod $\ell$ symbol $s$ of degree $n+1$, and a norm variety $X$ for $s$ for which $\cl\colon CH^2(X)[\ell]\to H^4(X,\Z_\ell(2))$ is not injective. If $\ell$ is odd, this is impossible, as $\cd_{\ell}(k)=2$. Suppose now that $\ell=2$, so that $X$ is the quadric hypersurface associated to a Pfister neighbor $q$ of rank $2^n+1$. By \cite[Theorem 6.1]{karpenko1990algebro}, $CH^2(X)_{\on{tors}}$ is either $0$ or $\Z/2$. Let $\mc{R}$ be the Rost motive of $X$: it is a direct summand of $M(X;\Z_{(2)})$. By \cite[Theorem RM.10]{karpenko2013standard}, $CH^2(\mc{R})[2]\neq 0$ if and only if there exists $1\leq i\leq n-1$ such that $2^n-2^i=2$, that is, if and only if $n=2$. If this is the case, then $CH^2(X)_{\on{tors}}\simeq \Z/2$ and $\dim(X)=3$.
    
    By definition of norm variety, for every field extension $F/k$ we have $X(F)\neq\emptyset$ (that is, $q_F$ is isotropic) if and only if $s_F$ is trivial. Therefore, by a theorem of Rost \cite[Theorem 5]{rost1990some} (see also \cite[Lemma 2.1]{yagita2008chow}, or follow the construction of the isomorphisms in \cite[Theorem RM.10]{karpenko2013standard}), the natural pull-back map $CH^*(\mc{R})\to CH^*(\mc{R}_F)$ is injective for all field extensions $F/k$ such that $q_F$ is anisotropic. 
    
    Recall that every form of degree $5$ over a $p$-adic field is isotropic; see \cite[XI, Example 6.2(4)]{lam2005introduction}. Thus, if $q$ is isotropic at all real places of $k$, then $q$ is isotropic at all places of $k$, and so it is isotropic by the Hasse-Minkowski principle \cite[VI, Principle 3.1]{lam2007exercises}, hence $CH^2(X)$ is torsion-free by \cite[Theorem 6.1]{karpenko1990algebro}. Suppose now that there exists one real embedding $k\subset \mathbb{R}$ such that $q_{\R}$ is not isotropic. We have a commutative square
    \[
    \begin{tikzcd}
    CH^2(X)/2 \arrow[r,"\sim"] \arrow[d,"\cl"]  & CH^2(X_{\R})/2\arrow[d, hook,"\cl"] \\
    H^4(X,\Z/2) \arrow[r] & H^4(X_{\R},\Z/2) 
    \end{tikzcd}
    \]
    where the vertical maps are the cycle class maps in \'etale cohomology and the horizontal maps are induced by base change. The vertical map on the right is injective by \cite[Proposition 2.5]{colliot1995unramified}. Since $CH^2(X)_{\tors}\simeq \Z/2$, we deduce that $\cl\colon CH^2(X)_{\tors}\to H^4(X,\Z/2)$ is injective, and so 
    $\cl\colon CH^2(X)_{\tors}\rightarrow H^4(X,\Z_2(2))$
    is also injective.
\end{rem}

	\section*{Acknowledgements}

We thank Burt Totaro for telling us about \Cref{schreieder}. We thank Jean-Louis Colliot-Th\'el\`ene, Stefan Schreieder, 
Burt Totaro,
and the referee
for helpful comments and suggestions on this work.

\end{document}